\journal{arXiv}
\setlist{itemsep=-4pt,topsep=0pt}
\algnewcommand{\algorithmicgoto}{\textbf{go~to}}%
\algnewcommand{\Goto}[1]{\algorithmicgoto~Step~\ref{#1}}%
\newcommand{\Continue}{\textbf{continue}}
\newcommand{\To}{\textbf{to}}
\newcommand{\ForEach}[1]{\For{\textbf{each} #1}}
\newcommand{\EndForEach}{\EndFor{}~\textbf{each}}
\algnewcommand{\IIf}[1]{\State\algorithmicif\ #1\ \algorithmicthen}
\algnewcommand{\ElseIIf}[1]{\algorithmicelse\ #1}
\algnewcommand{\ElseI}[1]{\algorithmicelse\ #1}
\algnewcommand{\EndIIf}{\unskip\ \algorithmicend\ \algorithmicif}
\theoremstyle{plain}
\newtheorem{theorem}{Theorem}[section]
\newtheorem{lemma}[theorem]{Lemma}
\theoremstyle{definition}
\newtheorem{example}[theorem]{Example}
\newtheorem{definition}[theorem]{Definition}
\newtheorem{remark}[theorem]{Remark}
\newcommand \ie {\textit{i.e.}}
\newcommand \eg {\textit{e.g.}}
\newcommand \rev {\mathop{\rm rev}}
\newcommand \totient{\phi}  
\newcommand \MoebiusFn{\mu}  
\newcommand\CoeffHeight{{\rm H}}
\newcommand\notdiv{\nmid}
\newcommand\rad{\mathop{\rm rad}}
\newcommand \CC {{\mathbb C}}
\newcommand \FF {{\mathbb F}}
\newcommand \NN {{\mathbb N}}
\newcommand \QQ {{\mathbb Q}}
\newcommand \ZZ {{\mathbb Z}}
\def\res{\mathop{\rm res}}
\def\tfrac #1#2{{\textstyle\frac{#1}{#2}}}
\def\cocoa{\mbox{\rm
C\kern-.13em o\kern-.07 em C\kern-.13em o\kern-.15em A}}
\def\apcocoa{\mbox{\rm
A\kern-0.13em p\kern -0.07em C\kern-.13em o\kern-.07 em C\kern-.13em
o\kern-.15em A}}
\begin{document}

\begin{frontmatter}

\title{Cyclotomic Factors and LRS-Degeneracy}

\author[TUK]{John Abbott}
\ead{abbott@dima.unige.it}
\affiliation[TUK]{organization={Fachbereich~Mathematik, RPTU~University~Kaiserslautern--Landau},
	organizationsep={ }, 
    addressline={Gottlieb-Daimler-Strasse},
    city={Kaiserslautern},
    postcode={D-67663},
    country={Germany}}

\author[UPA]{Nico Mexis}
\ead{nico.mexis@uni-passau.de}
\affiliation[UPA]{organization={Fakult\"at f\"ur Informatik und Mathematik, Universit\"at~Passau},
    addressline={Innstrasse},
    city={Passau},
    postcode={D-94032},
    country={Germany}}

\begin{abstract}
  We present three new, practical algorithms for polynomials in $\ZZ[x]$:
  one to test if a polynomial is cyclotomic, one to determine which cyclotomic
  polynomials are factors, and one to determine whether
  the given polynomial is LRS-degenerate.  A polynomial is \emph{LRS-degenerate} iff
  it has two distinct roots $\alpha, \beta$ such that $\beta = \zeta \alpha$
  for some root of unity $\zeta$.  All three algorithms are based
  on ``intelligent brute force''.  The first two produce the indexes
  of the cyclotomic polynomials; the third produces a list of degeneracy orders.
  The algorithms are implemented in CoCoALib.
\end{abstract}



\begin{keyword}
$\!\!$Linear recurrence sequence \sep root of unity \sep cyclotomic polynomial
\MSC[2020] 11R18 \sep 11B37 \sep 11C08
\end{keyword}

\end{frontmatter}


\section{Introduction}

We present three distinct but related algorithms.  Cyclotomic
polynomials enjoy many special properties, and it is useful being able
to identify them quickly~---~our first algorithm does this, improving
upon the methods in~\cite{BD89}.  Our
second algorithm finds which cyclotomic polynomials appear as factors:
this refines the method of Beukers {\&} Smyth~\cite{SB02}.  The third
algorithm is an improved method
for detecting \emph{LRS-degenerate} polynomials compared to those
given in \cite{CDM11}: recall that a polynomial is LRS-degenerate
iff it has two distinct roots whose ratio is a root of unity.
In~\cite{CDM11} the property was named simply \emph{degenerate},
derived from the notion of \emph{degenerate Linear Recurrence Sequence (LRS).}

Interest in LRS-degenerate polynomials derives mostly from the
Skolem-Mahler-Lech theorem~\cite{LOW21}, which characterises the zeroes
of a LRS: in particular, it states that if the LRS has infinitely many zeroes then
the associated polynomial is LRS-degenerate.

All three of our algorithms are based on ``brute force applied
intelligently'' in contrast to the theoretical elegance of earlier
methods presented in \cite{BD89,SB02,CDM11}.  In particular our algorithm for detecting
LRS-degeneracy is a realization of the modular approach hinted at towards
the end of Section~3 in \cite{CDM11}.

Our emphasis is on practical utility, so we do not present asymptotic complexity
analyses of the algorithms, since they are of little utility in this
context (and would detract from the focus of this paper).  We do
present timings; in particular, we show that our LRS-degeneracy
algorithm is significantly faster than the two which were presented in
\cite{CDM11}.

\subsubsection*{Acknowledgements}

Abbott began working on this topic while at the University of Passau; he
then transferred to Rheinland-Pf\"alzische Technische Universit\"at in
Kaiserslautern, where he is supported by the Deutsche
Forschungsgemeinschaft, specifically via Project-ID~286237555~--~TRR 195.
Mexis is working as part of the Project
``CySeReS-KMU: Cyber Security and Resilience in Supply Chains with focus on SMEs''
(project number BA0100016), co-funded by the European Union through
INTERREG VI-A Bayern-Österreich 2021--2027.

The algorithms presented here have been implemented in C++ as part of CoCoALib~\cite{CoCoALib}, from version~0.99856.
The timings reported in Sections~\ref{sec:TimingsCyclotomicTest}, \ref{sec:TimingsCyclotomicFactors}
and \ref{sec:TimingsLRSDegenerate} were obtained on an AMD Ryzen~9 7900X processor, and are given in seconds.

\clearpage

\section{Notation, Terminology, Preliminaries}

Here we introduce the notation and terminology we shall use.

We shall often restrict attention to square-free, content-free polynomials
in $\ZZ[x]$: a general non-zero polynomial in $\QQ[x]$ may readily be rescaled
by clearing denominators and dividing out the content to yield a content-free
polynomial in $\ZZ[x]$.  We can
easily compute the \textbf{radical} (\ie~product of the factors in a
square-free factorization) as $f/\gcd(f,f')$ where $f'$ denotes the
formal derivative of $f$. We shall write the \textbf{reverse} of
a non-zero polynomial as $\rev(f) = x^d f(x^{-1})$ where $d = \deg(f)$.

\begin{definition}
\label{def:prim-root-unity}
Let $K$ be a field, and let $\zeta \in K$.  We say that $\zeta$ is a \textbf{root of unity}
if there is $n \in \NN_{>0}$ such that $\zeta^n=1$.  Additionally,
we say that $\zeta$ is a \textbf{primitive $\bm{k}$-th root of unity} if $\zeta^k=1$ and
there is no positive $\kappa < k$ with $\zeta^\kappa = 1$.
\end{definition}

\begin{example}
\label{ex:PrimKthRoot}
  Let $k \in \NN_{\ge 2}$, and let $p = 1+ks$ be prime (for some $s \in \NN$).
  Let $g$ be a primitive root modulo $p$, then $\zeta = g^{(p-1)/k} = g^s$ is a primitive
  $k$-th root of unity in the finite field $\FF_p$.
\end{example}

We recall below some fairly standard notation.
\begin{definition}
\label{def:height}
Let $f = \sum_{j=0}^d a_j x^j\in \CC[x]$.  We define the \textbf{height}
of $f$ to be $H(f) = \max \{ |a_0|, |a_1|, \ldots, |a_d| \}$ with
the special case $H(0) = 0$.
\end{definition}

\begin{definition}
\label{def:totient}
Let $\totient$ denote \textbf{Euler's totient function} for positive
integers: namely for every prime $p$ we have $\totient(p^r) = p^{r-1}
(p-1)$, and if $m,n$ are coprime integers then $\totient(mn) =
\totient(m) \, \totient(n)$.  In particular, $\totient(1) = 1$.
\end{definition}

\begin{definition}
\label{def:moebius}
Let $\MoebiusFn$ denote the \textbf{Möbius function} for positive
integers: namely, if $n$ is square-free with $r$ prime factors then
$\MoebiusFn(n) = (-1)^r$, otherwise $\MoebiusFn(n) = 0$.  In particular, $\MoebiusFn(1) = (-1)^0 = 1$.
\end{definition}

\begin{definition}
\label{def:cyclo}
Let $k \in \NN_{\ge 1}$.  We write $\Phi_k$ to denote the \textbf{$\bm{k}$-th cyclotomic
polynomial} (or \textbf{cyclotomic polynomial of index $\bm{k}$}), \ie~the unique monic polynomial of smallest degree whose roots are all primitive $k$-th roots of unity.
It is an elementary result that $\Phi_k(x) \in \ZZ[x]$ and $\deg(\Phi_k) = \totient(k)$.
\end{definition}

\clearpage 
\begin{lemma}
  \label{lem:cyclo-prods}
  We recall two product relations for cyclotomic polynomials:
  \begin{itemize}
    \item[(a)] The classical product
\[
x^k-1 \;=\; \prod_{d \mid k} \Phi_d(x)
\]
\item [(b)] Applying M\"obius inversion to~(a) gives
\[
\Phi_k(x) \;=\; \prod_{d \mid k} (x^d-1)^{\MoebiusFn(k/d)}
\]
  \end{itemize}
\end{lemma}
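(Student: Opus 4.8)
The plan is to establish (a) directly by comparing root sets over $\CC$, and then obtain (b) from (a) by running the standard Möbius inversion argument inside the multiplicative group of the field $\QQ(x)$.

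For part (a): I would first observe that $x^k-1$ is separable over $\CC$, since $\gcd(x^k-1,\,kx^{k-1})=1$ in characteristic zero; hence it has exactly $k$ distinct roots, namely the $k$-th roots of unity. Each such root $\zeta$ has a well-defined multiplicative order, which divides $k$, so $\zeta$ is a primitive $d$-th root of unity for a unique divisor $d$ of $k$; conversely, every primitive $d$-th root of unity with $d \mid k$ is a $k$-th root of unity. Therefore the root set of $x^k-1$ is the disjoint union, over the divisors $d$ of $k$, of the root sets of the $\Phi_d$. Since both sides of (a) are monic and square-free, equality of root sets forces equality of the polynomials; membership in $\ZZ[x]$ is guaranteed by Definition~\ref{def:cyclo}.

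For part (b): I would regard $d \mapsto x^d-1$ and $d \mapsto \Phi_d(x)$ as functions from $\NN_{\ge 1}$ to the abelian group $\QQ(x)^{\times}$, written multiplicatively; relation~(a) states that the former is the ``divisor-product'' of the latter. Substituting (a) into the right-hand side of (b) and regrouping the resulting finite double product gives
\[
\prod_{d \mid k}(x^d-1)^{\MoebiusFn(k/d)}
  \;=\; \prod_{d \mid k}\Bigl(\prod_{e \mid d}\Phi_e(x)\Bigr)^{\MoebiusFn(k/d)}
  \;=\; \prod_{e \mid k}\Phi_e(x)^{\,\sum_{e \mid d \mid k}\MoebiusFn(k/d)}.
\]
Writing $d=em$, the exponent of $\Phi_e(x)$ is $\sum_{m \mid k/e}\MoebiusFn\bigl((k/e)/m\bigr)=\sum_{m' \mid k/e}\MoebiusFn(m')$, which equals $1$ if $e=k$ and $0$ otherwise; hence the product collapses to $\Phi_k(x)$.

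The argument is essentially bookkeeping, and the only points deserving a word of care are: that the double product may be rearranged (valid because $\QQ(x)^{\times}$ is abelian and every factor $x^d-1$ is a nonzero element of the field); that the negative exponents $\MoebiusFn(k/d)$ cause no difficulty, since we compute in a group rather than a ring; and the auxiliary identity $\sum_{m \mid n}\MoebiusFn(m)=0$ for $n>1$ (with value $1$ for $n=1$), which follows immediately from Definition~\ref{def:moebius} by grouping the square-free divisors of $n$ according to their number of prime factors. I expect no real obstacle here: part (a) is a classical counting-of-roots argument, and part (b) is purely formal.
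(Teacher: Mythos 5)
Your proof is correct: part~(a) is the standard partition of the $k$-th roots of unity (which are distinct over $\CC$) according to their exact order, and part~(b) is the usual M\"obius-inversion computation carried out in the abelian group $\QQ(x)^{\times}$, where the exponent bookkeeping and the identity $\sum_{m \mid n}\MoebiusFn(m)=0$ for $n>1$ are handled properly. The paper states this lemma as a recollection of classical facts and gives no proof of its own, so your argument simply supplies the standard one that the authors take for granted; there is nothing to add or correct.
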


\begin{definition}
For a positive integer $k$ we write $G_k$ to denote the $\bm{k}$\textbf{-th Graeffe transformation}~\cite{GraeffeWiki} which maps roots to their $k$-th
powers.  Specifically we define $G_k(f(x)) = \res_y(f(y),\, x-y^k)$.
\end{definition}

\begin{remark}
The Graeffe transform exists in the same ring as the original polynomial.
Also for $k=2$ and $k=3$ the Graeffe transform can be computed quickly and
easily: \eg~$G_2(f) = (-1)^d ( f_{even}^2 - x\, f_{odd}^2)$ where $d = \deg(f)$, and
$f_{even}$ and $f_{odd}$ are defined by $f(x) = f_{even}(x^2) + x\, f_{odd}(x^2)$.

For composite $k$ we can compute a succession of simpler Graeffe transforms based on $G_{ab}(f) = G_a(G_b(f))$.

If several Graeffe transforms for consecutive $k$ are needed,
one can use one of the approaches mentioned in \cite{H63};
we implemented approach~4 from that work, based on Newton symmetric functions,
due to it being iteratively applicable to consecutive orders $k$.
\end{remark}

\begin{definition}
\label{def:kLRSd}
Let $k \in \NN_{\ge 2}$ and non-zero $f \in \CC[x]$.  Then we say that $f$ is $\bm{k}$\textbf{-LRS-degenerate}
iff $f$ has two distinct roots $\alpha,\beta \in \CC$ such that
$\alpha/\beta$ is a primitive $k$-th root of unity.
We also say simply that $f$ is \textbf{LRS-degenerate} to mean that there
exists at least one order $k \in \NN_{\ge 2}$ such that $f$ is $k$-LRS-degenerate. We are primarily interested in the case $f \in \ZZ[x]$.
\end{definition}

\begin{definition}
\label{def:LRSequiv}
Let non-zero $f,g \in \CC[x]$.  Then we say that $f$ and $g$ are \textbf{LRS-degeneracy equivalent}
iff  there are scale factors $\lambda,\mu \in \CC_{\neq 0}$ such that $f(x) = \mu\, x^d\, g(\lambda x)\;$ or
$\;f(x) = \mu\, x^d \, g(\lambda/x)$ for some $d \in \ZZ$.
It is elementary to show that this is an equivalence relation.
\end{definition}

\begin{remark}
If $f$ and $g$ are LRS-degeneracy equivalent then $f$ is $k$-LRS-degenerate iff $g$ is $k$-LRS-degenerate.
\end{remark}

\section{Algorithms for Testing Cyclotomicity}
\label{sec:test-cyclo}

An elegant method for determining whether a polynomial is cyclotomic
was presented in~\cite{BD89}.
We present a computational approach which is more effective than elegant;
it assumes that the polynomial is represented explicitly, and that we can
easily obtain the coefficients of the terms of highest (or lowest) degree.

Let $f \in \ZZ[x]$ be a non-constant, content-free polynomial.  We explain how
we check whether $f$ is cyclotomic; and if it is indeed cyclotomic,
we determine its index.

We start with a few trivial ``quick checks'' which every
cyclotomic polynomial will pass (but also some other polynomials may
pass too).  We handle specially the case where $f$ is of the form
$g(x^r)$ for some exponent $r>1$: this permits us to concentrate on
identifying only cyclotomic polynomials with square-free index.
If all the quick checks pass, we apply a more costly absolute test.

\subsection{Preliminary quick checks}
\label{sec:preliminary-checks}

The first 3 checks are very simple and quick, and should be applied
in the order presented.

\begin{itemize}
\item[Q1] If $f$ is not monic, it is not cyclotomic.  We henceforth assume
that $f$ is monic.

\item[Q2] If $\deg(f)$ is odd, it is not cyclotomic, \textbf{except for} the
  following two cases: $f(x) = x-1 = \Phi_1(x)$ or $f(x) = x+1 = \Phi_2(x)$.
  We henceforth assume that $\deg(f)$ is even.

\item[Q3] If we can obtain the constant coefficient of $f$ quickly, we can check
that this is $1$~---~otherwise $f$ is not cyclotomic.
\end{itemize}

We defer testing palindromicity until later: see check~Q5a in Section~\ref{sec:further-checks}.

\subsubsection{Coefficient of \texorpdfstring{$x^{d-1}$}{x\^{}(d-1)} and the special case \texorpdfstring{$f(x) = g(x^r)$}{f(x) = g(x\^{}r)}}

\begin{itemize}
\item[Q4a] Check the coefficient of $x^{d-1}$ where $d = \deg(f)$.
If it is not $-1$, $0$ or $1$ then $f$ is not cyclotomic.  If it is $0$, we take special action (described immediately below: Q4b--Q4e).  If it is non-zero, we skip steps Q4b--Q4e.

\item[Q4b] We check two easy special cases.  If $f(x) = x^d{-}1$ with $d > 1$ then it is not
cyclotomic.  If $f(x) = x^d{+}1$ with $d > 1$ then it is cyclotomic iff
$d = 2^s$ in which case the index is $2d$.

\item [Q4c] Define $r$ by saying that the
second highest term has degree $d{-}r$.  If the coefficient of $x^{d-r}$
is not $-1$ or $1$ then $f$ is not cyclotomic; and if $r \notdiv d$ or
any term in $f$ has degree not a multiple of $r$ then $f$ is not
cyclotomic.  Otherwise we have the largest exponent $r$ such that
$f(x) = g(x^r)$.

\item[Q4d] If $\frac{d}{r}$ is odd or $\totient(\rad(r)) \notdiv \frac{d}{r}$ then
  $f$ is not cyclotomic~---~here $\rad(r)$ denotes the \emph{radical} of $r$,
  \ie~the product of all distinct primes dividing~$r$.

\item[Q4e] We conduct the ``square-free index'' test of Section~\ref{subsec:PrefixMethod} on
  $g(x) = f(x^{1/r})$.  If $g(x)$ is not cyclotomic then neither is $f(x)$.
  If however $g(x)$ is cyclotomic with index $k$, we then check whether $\rad(r) \mid k$.
  If so then $f(x)$ is cyclotomic with index $rk$, otherwise $f$ is not cyclotomic.
\end{itemize}

\begin{example}
  Let $f = x^4+x^2+1$.  So $d=4$, and in Q4a we see that $x^{4-1}$ has coefficient 0. Check~Q4b does not reject $f$.
  In Q4c we obtain $r=4-2=2$, and find that $f(x) = g(x^2)$
  for $g = x^2+x+1$. Check~Q4d does not reject $f$.  In Q4e we identify $g$ as $\Phi_k(x)$ with index $k=3$.  We compute $\rad(r) = 2$,
  and see that $\rad(r) \notdiv k$; thus $f$ is not cyclotomic.  Indeed, $f = \Phi_3 \Phi_6$.

  In contrast, if instead we work with $f=x^6+x^3+1$, we obtain $r=3$ in Q4c, and
  as before we determine that $g = \Phi_k$ with $k=3$; but this time $\rad(r) \mid k$; so we conclude in Q4e that $f = \Phi_9$.
\end{example}

\subsection{Further quick checks}
\label{sec:further-checks}

We present some further simple checks; unlike the checks Q1--Q4b,
these checks can access all the coefficients of the polynomial,
so they are typically costlier.  Checks~Q5c and~Q5d are valid only for
cyclotomic polynomials with square-free index, so they must be conducted
only after all checks in Section~\ref{sec:preliminary-checks}
have been made; in contrast, checks~ Q5a and~Q5b are independent of the other checks.


\begin{itemize}
\item[Q5a] Check whether $f$ is palindromic.  If not then $f$ is not cyclotomic.

\item[Q5b] Check that the coefficient height of $f$, denoted~$\CoeffHeight(f)$, is within bounds: here we can combine
information from sequence \texttt{A138474} at OEIS~\cite{OEIS}, and from a table of
height bounds based on degree (see also Section~\ref{subsec:HeightBounds}).

\item[Q5c] Compute $V_1 = f(1)$; this is just the sum of the coefficients.
If $V_1 \neq 1$ and $V_1 \neq d{+}1$ then $f$ is not cyclotomic.
If $V_1 = d{+}1$: check that $d{+}1$ is prime and that
$\CoeffHeight(f)=1$; if so, $f$ is cyclotomic with (prime) index $d{+}1$;
otherwise $f$ is not cyclotomic.

\item[Q5d] Compute $V_{-1} = f(-1)$; this is an ``alternating sum'' of
the coefficients.
If $V_{-1} \neq 1$ and $V_{-1} \neq d{+}1$ then $f$ is not cyclotomic.
If $V_{-1} = d{+}1$: check that $d{+}1$ is prime and that
$\CoeffHeight(f)=1$; if so, $f$ is cyclotomic with index $2(d{+}1)$;
otherwise $f$ is not cyclotomic.
\end{itemize}

Observe that all four of these checks can be performed together
in a single scan over the coefficients.  Also, checks~Q5c and~Q5d can be
seen as a special case of our method for identifying cyclotomic factors
presented in Section~\ref{sec:cyclo-factors}.

\begin{remark}
We implemented these checks in CoCoALib, but found that they are not as fast
as the ``matching prefix'' method described in Section~\ref{subsec:PrefixMethod}.
This is partly explained by the fact that in CoCoALib ``reading'' all the
coefficients of a polynomial is costly.  Thus our final implementation does
not actually employ these checks.
\end{remark}

\vspace*{-15pt} 
\subsection{Testing for a cyclotomic polynomial with square-free index}
\label{subsec:PrefixMethod}
\vspace*{-7pt} 

Here we assume that all the checks from Section~\ref{sec:preliminary-checks} have
been made, so that if $f$ is cyclotomic then its index must
be square-free (and at least~3).  As already indicated, the test we
give here involves ``smart brute force'', and is decidedly more
computationally costly than the quick checks in Section~\ref{sec:preliminary-checks}.

The idea here is simple: we generate a list of possible candidate
indexes, and then repeatedly whittle it down by computing prefixes of
the corresponding cyclotomics, and keeping only those indexes whose
prefixes match the highest terms in $f$.  If more than one candidate
index survives, we refine the list again using longer prefixes.
In Algorithm~\ref{algm:CycloIndex}~(\textbf{CycloIndex}) we write $f_m$ to mean the $m$-prefix of $f$,
\ie~all terms of degree greater than $d{-}m$.

\begin{remark}
  In Algorithm~\ref{algm:CycloIndex}~(\textbf{CycloIndex}) in Steps~\ref{step:compute-cyc}
  and~\ref{step:final-check}, it suffices to verify
the $\frac{d}{2}$-prefix and then check that $f$ is palindromic.
\end{remark}

\begin{algorithm}[!ht]
\caption{\textbf{(CycloIndex)}}
\label{algm:CycloIndex}
\begin{algorithmic}[1]
    \Require $f \in \ZZ[x]$ monic, even degree $d$, and $a_{d-1}$, coefficient of $x^{d-1}$, is $\pm 1$
    \Ensure A positive integer $k$ if $f$ is the $k$-th cyclotomic polynomial, otherwise \emph{not cyclotomic}
    \Statex
    \State $L \gets \totient^{-1}(d)$ keeping only the square-free values
    \State Filter $L$: keep only those $n \in L$ for which $\MoebiusFn(n) = -a_{d-1}$
    \State Choose initial prefix length $m=32$
    \While{$L$ contains more than 1 element}
    \State $L_{new} \gets [\,]$, an empty list
    \ForEach{$n \in L$}
    \State Compute $p$, the $m$-prefix of $\Phi_n$
    \IIf{$p = f_m$} append $n$ to $L_{new}$ \EndIIf \label{step:put-n}
    \EndForEach
    \State $L \gets L_{new}$
    \State $m \gets 4m$ \label{step:incr4}
    \EndWhile
    \IIf{$L=[\,]$} \Return \emph{not cyclotomic} \EndIIf
    \State $k \gets L[1]$, compute $\Phi_k$    \label{step:compute-cyc}
    \IIf{$f = \Phi_k$} \Return $k$ \ElseI \Return \emph{not cyclotomic} \EndIIf \label{step:final-check}
\end{algorithmic}
\end{algorithm}

\begin{remark}
In Algorithm~\ref{algm:CycloIndex}~(\textbf{CycloIndex}) the initial value for the prefix
length $m$, and the increment in Step~\ref{step:incr4} were chosen because they
worked well in our tests.  We did not try ``optimizing'' the strategy
as it seemed to be unnecessary.  After the first iteration of the main
loop, $L$ typically contains only very few elements.
\end{remark}

\begin{remark}
  A referee described a potentially more efficient version of Algorithm~\ref{algm:CycloIndex}~(\textbf{CycloIndex}):
  an outer loop iterates over the candidate indexes, and an inner loop checks each
  candidate by constructing successively longer prefixes until the candidate is
  either excluded or confirmed as sole candidate for final verification. When the $m$-prefix of the candidate $k$ matches, and no other candidate $k'$ satisfies $\gcd(k', (m{-}1)\#) = \gcd(k, (m{-}1)\#)$ then $k$ is a sole candidate; here $(m{-}1)\#$ denotes the primorial of $m{-}1$.
  The referee's approach may indeed be faster if the input polynomial is actually cyclotomic, and its
  index appears early in the list of candidates.  Nevertheless, we expect the difference in overall speed to be
  minor. We point out also that the final verification represents typically the main part of the total computational cost.
  Both Algorithm~\ref{algm:CycloIndex}~(\textbf{CycloIndex}) and the referee's alternative can be adapted to forgo the final verification, for a faster but unverified result; even then, we doubt that
  there would be any significant difference in speed.
\end{remark}

\subsubsection{Computing an \texorpdfstring{$m$}{m}-prefix}
\label{subsec:ComputePrefix}

Let $n \in \ZZ_{>2}$ be square-free.  Since cyclotomic polynomials are
palindromic, prefixes and suffixes are equivalent.  It is convenient
here to discuss suffixes: namely $\Phi_n(x) \bmod x^m$.  We can compute
an $m$-suffix cheaply via the Möbius inverted product:
\[
\Phi_n(x) \;=\; \prod_{d \mid n} (x^d-1)^{\MoebiusFn(n/d)} \;=\; \prod_{d \mid n} (1-x^d)^{\MoebiusFn(n/d)}
\]
We may negate the factors because $n$ has an even number of divisors.
Observe that multiplying by $1-x^d$ is just a \emph{shift-and-subtract} operation, and
indeed dividing by $1-x^d$ can also be implemented as a \emph{shift-and-add} operation
(working up from lowest to highest degree).  We may ignore all $d \ge m$.

\subsubsection{Coefficient Bounds and Machine Integers}

In Algorithm~\ref{algm:CycloIndex}~(\textbf{CycloIndex}) at
Step~\ref{step:put-n} we use $f_m$, the $m$-prefix of $f$.  We incorporate
a coefficient height check while generating $f_m$ from $f$; if a
coefficient is too large then we know that $f$ cannot be cyclotomic.
Furthermore, so long as $m$ is not too large, all arithmetic can be done with
machine integers: the outermost coefficients of a cyclotomic
polynomial cannot be too large.  A table of bounds for each of
the outermost coefficients is available as
sequence \texttt{A138474} at OEIS~\cite{OEIS}.  On a 32-bit computer
overflow cannot occur with $m \le 808$; and on a 64-bit computer all
$m \le 1\,000$ are safe~---~the table at OEIS stops at index $1\,000$; the
true limit for 64-bit computers is likely considerably higher.

For prefixes of length $m > 1\,000$ we can use
the tabulated values of height bounds for cyclotomic polynomials (up to index
$2\,583\,303\,555$~---~see Section~\ref{subsec:HeightBounds}).
Additionally, for each candidate index we can apply the formulas below
to obtain potentially better bounds (especially when the index has few
prime factors).
Our implementation gives an error if the best bound found exceeds the
largest machine integer.

In~\cite{B12} there are formulas bounding the height of $\Phi_n$ and of
its power-series inverse $1/\Phi_n(x)$ for an odd, square-free index
$n = p_1 p_2 \cdots p_k$ where $p_1 < p_2 < \cdots < p_k$.
Interestingly, all the bounds are independent of the largest two prime factors.
If $k<3$ then the height bound is~$1$.
If $k=3$ then the bound is $\tfrac{2}{3} p_1$ according to preprint~\cite{JMRSV23},
and on the basis of this result the article~\cite{B12} establishes bounds on the height
of $\Phi_n$ for $k=4,5,6$:
\[
\frac{2}{3} p_1^3 p_2 \qquad \frac{2}{9} p_1^7 p_2^3 p_3 \qquad \frac{32}{729} p_1^{15} p_2^7 p_3^3 p_4
\]
The bounds on the height of the inverse have a similar form but with different
constant factors.  If the proof in~\cite{JMRSV23} is not confirmed as correct then~\cite{B12} obtains other constant factors.
A fully general formula is given in~\cite{BPV81}.  So far we have implemented
just the simple cases $k=1$ and $k=2$.

If some of the prime factors of $n$ are greater than the prefix length $m$,
we may sometimes obtain lower bounds.  For instance, if $n = p_1 p_2 p_3$\ and $p_1 < m \le p_2$ then the only
non-trivial truncated factors in the M\"obius product are the
same as those required to compute the $m$-prefix of $\Phi_{p_1}$.  Thus the
height bound for index $p_1$ limits the~$m$ outermost coefficients also
of $\Phi_n$; indeed, in this instance we know that the coefficients
of $x^e$ for $p_1 \le e < p_2$ must be zero.
Depending on whether the number of prime factors bigger than~$m$ is even
or odd, we use the height bound for the cyclotomic polynomial or for its inverse.

\subsubsection{Computing preimages under \texorpdfstring{$\totient$}{phi}}


All elements of $\totient^{-1}(d)$ can be computed quickly enough by a simple recursive ``tree search''~\cite{CCS06}.
The algorithm may easily be adapted to produce only the square-free preimages.
The algorithm could also produce the preimages naturally in factored form, which is ideal for our
application~---~however we have not yet implemented this ``optimization''.

\subsection{Testing via evaluation}
\label{subsec:EvalMethod}

We recall the quick checks~Q5c and Q5d from Section~\ref{sec:further-checks}:
these perform evaluation at~$1$ and at~$-1$ respectively, combined with a check
that the resulting value is permissible.  We now show that evaluation at~$2$ provides
us with a simple way of obtaining a unique candidate index (or proof that the polynomial
is not cyclotomic).  However, our implementation of the prefix method of
Algorithm~\ref{algm:CycloIndex}~(\textbf{CycloIndex}) turned out to be faster.

\subsubsection{Obtaining a candidate index (via remainders of powers)}

An irreducible, monic polynomial is cyclotomic iff it divides $x^k-1$ for some
exponent $k > 0$.  The smallest such $k$ is equal to the index
of the cyclotomic polynomial.  We could use this as an absolute
test for cyclotomicity: check whether $f(x)$ divides $x^k-1$
for $k=1,2,3, \ldots$  Bounds on the inverse totient function
ensure that only finitely many values of $k$ need to be checked.
This approach was presented in~\cite{BD89}.
However, actually doing this is rather inefficient.

We could make the check faster by computing the canonical remainder of
$x^r \bmod f(x)$ for $r=2d,\, 3d,\, 4d,\, \ldots$.  Bounds on the inverse totient function show that for degrees
$<36{\,}495{\,}360$ (see sequence \texttt{A355667} at OEIS~\cite{OEIS}) we can stop at $r=6d$ since $k$ cannot
exceed this value.  If the polynomial is indeed $\Phi_k$ then once we
try the first exponent $r \ge k$ we obtain remainder $x^s$ (with $s=r-k$), and it
is easy to identify if the remainder has this form.  From this remainder
we easily obtain $k$ as $r - s$.  However again, actually
computing these polynomial remainders would still be rather costly.

Instead we mimic the above but under the mapping $x \mapsto 2$.
We evaluate $f(2)$, and check that $2^{d-1} < f(2) < 2^{d+1}$.
Then we compute $2^r \bmod f(2)$ for $r=2d,\, 3d,\, 4d,\, \ldots, 7d$;
to advance to the next value in the sequence we just multiply by $2^d$ then
reduce modulo $f(2)$.
Observe that on a binary computer it is easy to detect if the remainder is
of the form $2^s$, in which case we obtain the candidate index $k = r-s$.
We finally verify that $\deg(f) = \totient(k)$ and if so, also that
$f = \Phi_k$ by computing $\Phi_k$.

\begin{remark}
  We note that the final check (that $f = \Phi_k$) is necessary, at least for higher degrees and
  index $k$ not of the form $p$ or $2p$ for some prime~$p$.
  Consider the polynomial $\Phi_k(x) + x^{d/2-3}\, (x^2-1)^2\, (2x^2-5x+2)$ where $d = \deg(\Phi_k)$;
  it is palindromic, and will pass all the evaluation tests, since the added polynomial
  vanishes at $x = \pm1$ and $x=2$.  For higher degrees the coefficient height check will
  pass also.
\end{remark}

\subsubsection{Explicit Algorithm}
\label{sec:CycloIndexByEval}

The algorithm has to exclude just a few edge cases: namely, $\Phi_1$ and $\Phi_6$
which are exceptions in Zsigmondy's theorem.  It is convenient, but not necessary,
to exclude $\Phi_2$ as well.  We assume that the $\bmod$ function returns the least
non-negative remainder.

We note that on a binary computer it is quick and simple to recognize if an integer is
a power of~$2$ and to determine the corresponding exponent: we require this in Algorithm~\ref{algm:CycloIndexByEval}~(\textbf{CycloIndexByEval}) Step~\ref{step:power-of-two}.

\begin{algorithm}[!ht]
\caption{\textbf{(CycloIndexByEval)}}
\label{algm:CycloIndexByEval}
\begin{algorithmic}[1]
    \Require $f \in \ZZ[x]$ monic, even degree $d$, and $f \neq \Phi_6$
    \Ensure A positive integer $k$ if $f$ is \emph{very likely} the $k$-th cyclotomic polynomial, otherwise \emph{not cyclotomic}
    \Statex
    \State $v \gets f(2)$
    \IIf{$v \le 2^{d-1}$ or $v \ge 2^{d+1}$} \Return \emph{not cyclotomic} \EndIIf
    \State $p \gets 2^d \bmod v$; $a \gets p$
    \For{$r' \gets 2$  \To~$7$}   \Comment {where $r' \leftrightarrow \tfrac{r}{d}$; see also Remark~\ref{rmk:pwr2remainder}}
    \State $a \gets (ap) \bmod v$
    \IIf{$a = 2^s$ for some $s$} \Return $r'd-s$ \EndIIf            \label{step:power-of-two}
    \EndFor
    \State \Return \emph{not cyclotomic}
\end{algorithmic}
\end{algorithm}

\vspace*{-2pt} 
\subsubsection{Supporting arguments for evaluation at 2}

We justify Algorithm~\ref{algm:CycloIndexByEval}~(\textbf{CycloIndexByEval}) presented in Section~\ref{sec:CycloIndexByEval}.

\begin{lemma} \label{lem:CycloAtX=2Bounds}
Let $k \ge 2$, then  $2^{d-1} < \Phi_k(2) < 2^{d+1}$ where $d = \deg(\Phi_k)$.
\end{lemma}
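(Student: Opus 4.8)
The plan is to bound $\Phi_k(2)$ by viewing it as a product over the primitive $k$-th roots of unity. Write $\Phi_k(x) = \prod_{j}(x - \zeta_j)$ where $\zeta_j$ ranges over the $\totient(k)$ primitive $k$-th roots of unity, so that $\Phi_k(2) = \prod_j (2 - \zeta_j)$. Since each $\zeta_j$ lies on the unit circle, the triangle inequality gives $1 = 2 - |\zeta_j| \le |2 - \zeta_j| \le 2 + |\zeta_j| = 3$ for every factor. Taking the product over all $d = \totient(k)$ factors yields $1 \le |\Phi_k(2)| \le 3^d$, and since $\Phi_k(2)$ is a positive integer for $k>1$ (its value is real and the only way a factor $2-\zeta_j$ can have modulus $1$ is if it is a genuine edge case), we would get a crude bound. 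This is not yet tight enough: $3^d$ is much larger than $2^{d+1}$, so I need to sharpen the upper estimate, and the lower bound $|\Phi_k(2)|\ge 1$ is also weaker than the claimed $2^{d-1}$.

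For a sharper argument I would instead use the factorization $x^k - 1 = \prod_{e \mid k} \Phi_e(x)$ together with an inductive or telescoping estimate. A cleaner route: compare $\Phi_k(2)$ against $2^d$ directly by pairing each root $\zeta_j$ with the term $2-\zeta_j = 2(1 - \zeta_j/2)$, giving $\Phi_k(2) = 2^d \prod_j (1 - \zeta_j/2)$. Now $\prod_j(1 - \zeta_j/2)$ has modulus $\prod_j|1 - \zeta_j/2|$, and since $|\zeta_j/2| = 1/2$ we have $1/2 \le |1-\zeta_j/2| \le 3/2$. That again only gives $2^{d-1} \le |\Phi_k(2)| \le 2^d (3/2)^d$, so the naive per-factor bound still fails on the upper side. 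The key observation needed is that the roots come in conjugate pairs (for $k>2$), and $|1 - \zeta/2|\cdot|1 - \bar\zeta/2| = |1 - \zeta/2|^2$'s... more precisely $\bigl(1-\tfrac{\zeta}{2}\bigr)\bigl(1-\tfrac{\bar\zeta}{2}\bigr) = 1 - \mathrm{Re}(\zeta) + \tfrac14 = \tfrac54 - \cos\theta$ where $\zeta = e^{i\theta}$; this ranges in $[1/4, 9/4]$, but averaged/multiplied over a full set of roots of unity the product collapses. Indeed $\prod_{j}(1 - \zeta_j/2)$ over \emph{all} $k$-th roots of unity equals $1 - (1/2)^k$ by the identity $\prod_{\omega^k=1}(x-\omega) = x^k-1$ evaluated suitably; restricting to primitive roots and using Möbius inversion (Lemma~\ref{lem:cyclo-prods}(b)) gives $\Phi_k(2) = \prod_{e\mid k}(2^e - 1)^{\MoebiusFn(k/e)}$.

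So the real engine of the proof should be the Möbius-inverted product $\Phi_k(2) = \prod_{e \mid k}(2^e-1)^{\MoebiusFn(k/e)}$. Splitting the divisors $e$ of $k$ according to the sign of $\MoebiusFn(k/e)$, one gets $\Phi_k(2) = N/D$ where $N = \prod_{\MoebiusFn(k/e)=1}(2^e-1)$ and $D = \prod_{\MoebiusFn(k/e)=-1}(2^e-1)$. The dominant term on top is $e=k$ itself (when $\MoebiusFn(1)=1$), contributing $2^k - 1$, and one shows $N$ is, up to a bounded factor, $2^k$ while $D$ is comparably $2^{k - p}$ for the smallest prime $p\mid k$; a careful accounting of $\sum_{\MoebiusFn(k/e)=1} e - \sum_{\MoebiusFn(k/e)=-1} e = \totient(k) = d$ (this exponent identity is exactly what makes $\Phi_k(2) \approx 2^d$) plus crude $\pm O(1)$ corrections from the $-1$'s in $2^e-1$ should pin $\Phi_k(2)$ into the interval $(2^{d-1}, 2^{d+1})$. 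I expect the main obstacle to be this bookkeeping: one must show the multiplicative error terms $\prod (1 - 2^{-e})^{\pm 1}$ stay strictly within $(1/2, 2)$ for \emph{all} $k>1$, which requires separate (easy) verification of the smallest cases $k=2,3,4,6$ by hand and a uniform geometric-series bound $\prod_{e\ge 1}(1-2^{-e})^{-1} < 4$ (actually $<3.5$) for the rest. Equivalently, and perhaps more cleanly, one can prove the two inequalities separately and directly: for the upper bound, $\Phi_k(2) < 2^d\cdot\frac{2^k}{2^k-\dots}$... — but the slick finish is simply $\Phi_k(2) = \prod_j(2-\zeta_j)$ with the observation that $\prod_j(2-\zeta_j) \cdot \prod_j(1/2 - \zeta_j)/(\text{stuff})$... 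I would ultimately present the conjugate-pair version: for $k > 2$, pair $\zeta$ with $\bar\zeta$, note $\tfrac54 - \cos\theta < \tfrac94 < (3/2)^2$ doesn't suffice, so fall back on the Möbius product as the rigorous backbone, handling $k=2$ ($\Phi_2(2)=3$, $d=1$, $2^0<3<2^2$ ✓) as an explicit base case.
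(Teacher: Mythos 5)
Your write-up is an exploration rather than a proof, and the one step that actually carries the lemma is missing. After discarding the triangle-inequality attempts, you correctly land on the identity $\Phi_k(2)=\prod_{e\mid k}(2^e-1)^{\MoebiusFn(k/e)}=2^{d}\prod_{e\mid k}\bigl(1-2^{-e}\bigr)^{\MoebiusFn(k/e)}$, using $\sum_{e\mid k}\MoebiusFn(k/e)\,e=\totient(k)=d$. But the lemma is then exactly the statement that the correction factor $\prod_{e\mid k}(1-2^{-e})^{\MoebiusFn(k/e)}$ lies strictly between $1/2$ and $2$ for every $k>1$, and your proposed bookkeeping does not establish this. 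Bounding the negative-exponent part alone by $\prod_{e\ge 1}(1-2^{-e})^{-1}<3.5$ only yields $\Phi_k(2)<3.5\cdot 2^{d}$, weaker than $2^{d+1}$; symmetrically, the positive-exponent part alone is only bounded below by $\prod_{e\ge 1}(1-2^{-e})\approx 0.289$, weaker than $1/2$. These per-side crude bounds fail not just for a few small $k$ (so checking $k=2,3,4,6$ by hand cannot repair them): for $k$ prime the factor is $2(1-2^{-k})$, approaching $2$, and for $k$ with $\MoebiusFn(k)=-1$ and several prime factors (e.g.\ $k=105$) the denominator-only bound already exceeds $2$, so one must play the numerator and denominator terms off against each other. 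That cancellation argument is the real content, and it is absent; the remainder of your text (conjugate pairs, ``the slick finish is simply\dots'') records attempts you yourself abandon.

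For comparison, the paper does not prove the estimate from scratch at all: it invokes Theorem~5 of the cited work of Thangadurai and Vatwani with $b=2$, which gives $2^{d-1}\le\Phi_k(2)\le 2^{d+1}$, and then upgrades to strict inequalities simply because $\Phi_k(2)$ is odd while $2^{d\pm 1}$ are even (a parity remark your sketch also lacks, though it would be easy to add). If you want a self-contained proof along your M\"obius-product line, you must carry out the paired estimate carefully, e.g.\ grouping each divisor $e=k/s$ with $\MoebiusFn(s)=-1$ against a companion divisor with $\MoebiusFn=+1$ (such as $k/(sp)$ for a fixed prime $p\mid k$) so that the ratios $(1-2^{-e'})/(1-2^{-e})$ multiply to something provably in $(1/2,2)$; as it stands, the interval $(1/2,2)$ is asserted, not proved.
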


\begin{proof}
  We just use Theorem~5 from~\cite{TV11} with $b=2$.  Since $\Phi_k(2)$ is odd, we can change the inequalities to strict ones.
\end{proof}

\begin{lemma} \label{lem:CycloAtX=2Divides}
Let $k \in \NN_{>0}$ with $k \neq 6$.  Let $r \in \NN_{>0}$ be
such that $\Phi_k(2) \mid 2^r{-}1$.  Then $k \mid r$.
\end{lemma}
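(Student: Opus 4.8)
The plan is to exhibit, for each admissible $k$, a prime $p$ dividing $\Phi_k(2)$ whose multiplicative order modulo $p$ is exactly $k$; then the hypothesis $\Phi_k(2) \mid 2^r-1$ forces $p \mid 2^r-1$, hence $k = \operatorname{ord}_p(2) \mid r$, which is the claim. A prime with this property is precisely a \emph{primitive prime divisor} of $2^k-1$, so the one external ingredient is Bang's theorem (the base-$2$ case of Zsygmondy's theorem): for every $k \in \NN_{>0}$ with $k \neq 1$ and $k \neq 6$, the integer $2^k-1$ has a primitive prime divisor, \ie~a prime $p$ with $p \mid 2^k-1$ but $p \nmid 2^j-1$ for all $1 \le j < k$.

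First I would dispose of the trivial case $k=1$: here $\Phi_1(2) = 1$, which divides $2^r-1$ for every $r$, and $1 \mid r$ holds vacuously. So assume $k \ge 2$ and $k \neq 6$, and let $p$ be a primitive prime divisor of $2^k-1$. The next step is to show $p \mid \Phi_k(2)$. By Lemma~\ref{lem:cyclo-prods}(a) we have $2^k - 1 = \prod_{d \mid k} \Phi_d(2)$, and since $p$ is prime it divides $\Phi_d(2)$ for at least one divisor $d$ of $k$. If such a $d$ were a proper divisor of $k$, then from $\Phi_d(x) \mid x^d - 1$ in $\ZZ[x]$ (again Lemma~\ref{lem:cyclo-prods}(a), applied with exponent $d$) we would obtain $p \mid \Phi_d(2) \mid 2^d - 1$ with $1 \le d < k$, contradicting the primitivity of $p$. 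Hence $d = k$, so $p \mid \Phi_k(2)$.

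The remaining step is immediate. Note $p$ is odd, since it divides the odd number $2^k-1$, so $\operatorname{ord}_p(2)$ is defined; and primitivity says $\operatorname{ord}_p(2) = k$ (the order divides $k$ because $p \mid 2^k-1$, and it cannot be a proper divisor $j<k$, else $p \mid 2^j-1$). Now the hypothesis $\Phi_k(2) \mid 2^r-1$ together with $p \mid \Phi_k(2)$ gives $p \mid 2^r-1$, whence $\operatorname{ord}_p(2) \mid r$, that is $k \mid r$.

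There is essentially no genuine obstacle here beyond invoking Bang's theorem correctly and handling the edge cases; the proof is short once the primitive prime divisor is in hand. I would append a one-line remark that the exclusion of $k=6$ is intrinsic and not an artefact of the argument: $\Phi_6(2) = 2^2 - 2 + 1 = 3$ divides $2^2-1 = 3$, yet $6 \nmid 2$, which reflects the fact that $2^6-1 = 63 = 7 \cdot 9$ has no primitive prime divisor. The value $k=1$, by contrast, causes no trouble since $\Phi_1(2)=1$.
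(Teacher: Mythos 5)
Your proof is correct and follows essentially the same route as the paper: the paper's proof also rests on Zsigmondy's theorem with $a=2$, $b=1$ (i.e.\ Bang's theorem), merely stating the invocation tersely where you spell out the details (that the primitive prime divisor of $2^k-1$ must divide $\Phi_k(2)$ by the product formula, and the order argument giving $k \mid r$). Your explicit handling of $k=1$ and the remark that $k=6$ is a genuine exception are welcome elaborations, not a different method.
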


\begin{proof}
We observe that for $r>0$ we have $2^r-1 \neq 0$.  Since $\Phi_k(x)$
divides $x^k-1$, we have that $\Phi_k(2)$ is non-zero and divides
$2^k-1$.  Moreover, since $x^k-1$ divides $x^{sk}-1$ for all $s \in
\NN_{>0}$, we immediately have that $\Phi_k(2)$ divides $2^{sk}-1$ for
all $s \in \NN_{>0}$.

To show that there are no other exponents $r$ we invoke Zsigmondy's
Theorem (see Section~\ref{sec:ZsigmondysTheorem}) with $a=2$ and $b=1$.
\end{proof}

\begin{remark} \label{rmk:pwr2remainder}
We explain here why the exponents we use for testing (namely, $2d, 3d, 4d, \ldots, 7d$) are sufficient.

Assuming the polynomial is $\Phi_k$ for some square-free index $k$, we
know the value of $\totient(k) = \deg(\Phi_k)$.  Successive maxima of
$n/\totient(n)$ arise when $n$ reaches a primorial number: the
least $n$ where $n > 7 \totient(n)$ is at about $n \approx 1.3 \times
10^{16}$ when $\totient(n) \approx 1.8 \times 10^{15}$.  A fully general
implementation would compute the greatest exponent to test based on the degree
of the polynomial being tested.

Now we explain why we can ``make jumps'' of size $d$.  For any $\delta \in \NN$ we have that
$2^{k+\delta} \equiv 2^\delta \pmod{\Phi_k(2)}$.  So if $\delta <
\deg(\Phi_k)$ then by Lemma~\ref{lem:CycloAtX=2Bounds} the least
non-negative remainder of $2^{k+\delta}$ modulo $\Phi_k(2)$
is $2^\delta$, \ie~a power of $2$: something we
can test quickly on the computer.

 Let $s \in \NN$ be such that $sd < k \le (s{+}1)d$.  So the least
 non-negative remainder of $2^{sd}$ modulo $\Phi_k(2)$ is not a power of
 two (by Lemma~\ref{lem:CycloAtX=2Divides}), but $(s{+}1)d =
 k+\delta$ for some $\delta$ with $0 \le \delta < d$.  Hence
 $2^{s(d+1)} \equiv 2^\delta \pmod{\Phi_k(2)}$, and the right hand side
 is the least non-negative remainder, which we can recognise as a
 power of 2.
\end{remark}

\subsubsection{Height bounds for cyclotomic polynomials}
\label{subsec:HeightBounds}

In Q5b we check the height of $f$.  The outermost coefficients can be
checked via \texttt{A138474} as in Section~\ref{subsec:ComputePrefix}, but once
that table is exhausted we need another way: we use a second table as
described here (up to degree $11\,612\,159$).  If the degree of $f$ is too
large for this second table, the height check is skipped.

Since there are only finitely many cyclotomic polynomials of each
degree, there is a well-defined maximum coefficient height (as a
function of degree).  We do not know a nice formula for this maximum,
but use a precomputed table up to some degree limit.  For compactness
we employ a non-decreasing table of height maxima up to the given
degree, though this does weaken the check.

In our implementation the table of maximum coefficient heights
is represented as a cascade of \texttt{if} statements.
Letting \texttt{d} denote the degree, the first few lines are:
{\small
\begin{verbatim}
      if (d < 48) return 1;  // Phi(105)
      if (d < 240) return 2; // Phi(385)
      if (d < 576) return 3; // Phi(1365)
      if (d < 768) return 4; // Phi(1785)
      if (d < 1280) return 5; // Phi(2805)
      if (d < 1440) return 6; // Phi(3135)
      if (d < 3840) return 7; // Phi(6545)
      if (d < 5760) return 9; // Phi(15015)
      if (d < 8640) return 23; // Phi(21945)
\end{verbatim}
}

Note that our table is not the same as sequence \texttt{A160340} in
OEIS~\cite{OEIS}: our table comprises successive maximum heights by
degree, whereas \texttt{A160340} lists successive maximum heights by
index.  The first few entries do coincide, but then they diverge:
\eg~at degree $5760$ due to $\Phi_{15015}$, and at many other points.

Later entries in our table were derived from~\cite{AM11} whose data is
now available as part of~\cite{AM08Wayback}
under \emph{Library of data on the heights and lengths of cyclotomic polynomials.}

\subsubsection{Obtaining a candidate index (via table-lookup)}

We mention here a quick and easy way of identifying the index of a
cyclotomic polynomial, provided it has square-free index and degree $\le 1000$.
We do not use this in our implementation because of its limited range.

\clearpage
Let $S$ be the set of cyclotomic polynomials of degree up to $1000$ and
having square-free index.  Let the prime $p=57737$.  Then the mapping
$S \to \FF_p$ sending $f \mapsto f(2) \bmod p$ is 1--1.  So we can
precompute a table of indexes: \eg~an array whose $r$-th entry is the
index $k$ such that $r = \Phi_k(2) \bmod p$.  We fill ``empty'' array
entries with a value which is not a valid index (\eg~0).

We can use the table as follows: if $\deg(f) \le 1000$, compute $r =
f(2) \bmod p$.  If the table entry corresponding to residue $r$ is
``empty'' then $f$ is surely not cyclotomic.  Otherwise, let $k$ be
the table entry, and verify that $\totient(k) = \deg(f)$ and that
$-\MoebiusFn(k)$ is equal to the coefficient of $x^{d-1}$; if so then
maybe check that $2^k \equiv 1 \pmod{f(2)}$, and finally compute
$\Phi_k$ then test whether $f = \Phi_k$.

Such a table look-up is quick, but has
the obvious disadvantages that the table must be precomputed,
and that the table works only over a limited degree range.

\subsection{Timings}
\label{sec:TimingsCyclotomicTest}

In Table~\ref{table:timings-for-CyclotomicIndex} we give some sample timings of our implementation of the ``prefix
method'' (from Section~\ref{subsec:PrefixMethod}) in CoCoALib.  We
restrict to the case of identifying the index of a cyclotomic
polynomial \emph{without verification} in Steps~\ref{step:compute-cyc} and~\ref{step:final-check}, because the verification
step involves computing the full polynomial, and this computation
dominates.  To present meaningful timings we consider only
``difficult'' cases where a large prefix length is needed, and there
are many preimages to consider: for these examples, the smaller factor
of each index is a lower bound on the prefix length required.

\begin{table}[!ht]
    \centering
    \caption{Timings for \texttt{CyclotomicIndex}}
    \renewcommand{\arraystretch}{1.1}
    \begin{tabular}{|S[table-format=9.0]|c|S[table-format=5.0]||S[table-format=1.3]|}
        \hline
        {Index}   & Factors           & {$\#\totient^{-1}(d)$}    & {Time}          \\
        \hline
        124525451 & $8641 \times 14411$ &  3354   & 0.125 \\
        120507533 & $4481 \times 26893$ &  6861   & 0.140 \\
        334482719 & $5279 \times 63361$ &  8145   & 0.359 \\
        399083849 & $3329 \times 119881$ & 10819  & 0.406 \\
        \hline
    \end{tabular}
  \label{table:timings-for-CyclotomicIndex}
\end{table}

\section{Algorithms for Detecting Cyclotomic Factors}
\label{sec:cyclo-factors}

In this section we present an effective method of determining which
cyclotomic polynomials, if any, divide a given polynomial.
Once again, our method is ``refined brute force''.

An obvious way to determine the cyclotomic factors would be first to
compute the irreducible factors, and then use the method of
Section~\ref{sec:test-cyclo} to test each factor for being
cyclotomic~---~this approach is practical only for moderate degrees.  The task of finding all cyclotomic factors has already been studied and elegantly solved by Beukers and Smyth: in~\cite{SB02}
there is a remarkably short and simple algorithm for finding the
product of the cyclotomic factors of a square-free polynomial.  However, they
did not address the matter of identifying each individual cyclotomic
factor.  We present a new effective algorithm which identifies quickly
a list of indexes of cyclotomic factors.  It is possible that the list
contains a few false positives, though their presence is ``unlikely''; the
candidate indexes can be tested by trial division.

Let $f \in \ZZ[x]$ be the polynomial whose cyclotomic factors we wish to find.
We list here some potential preprocessing steps~---~they are not required for
correctness, yet could make the overall computation faster (though in our implementation there was no benefit):
\begin{itemize}
\item Take the radical: $f \gets f/\gcd(f,f')$
\item Extract the palindromic part: $f \gets \gcd(f,\rev(f))$
\item Extract the product of the cyclotomic factors using the Beukers {\&} Smyth algorithm~---~this can be costly if $f$ has high degree
\end{itemize}

\subsection{Finding cyclotomic factors by evaluation}
\label{subsec:CycloFactorsByEval}

The basis for our approach is evaluation of the polynomial (at several
rational points), and Zsigmondy's theorem (see~\cite{ZsigWiki}).

\subsubsection{Zsigmondy's Theorem}
\label{sec:ZsigmondysTheorem}

Let $(s_n)_{n \in \NN_{>0}}$
be a sequence of integers: let $k \in \NN_{>0}$, then a \emph{primitive prime}
for $s_k$ is a prime $p$ such that $p \mid s_k$ and $p \notdiv s_j$ for all
$j < k$. \\ 
Zsigmondy's theorem~\cite{ZsigWiki} tells us
that if we have the sequence $s_n = a^n - b^n$ for coprime
integers $a > b > 0$ then for every index the element $s_k$ has
a primitive prime.  The only exceptions are:
\begin{itemize}
  \item if $a = b+1$ then for $k=1$ we have $s_k = a^1 - b^1 = 1$ which has no prime factors
  \item if $a=2$, $b=1$ then for $k=6$ there is no such prime factor
  \item if $a+b = 2^r$ for some $r$ then for $k=2$ there is no such prime factor
\end{itemize}
We shall refer to the primitive primes for such sequences as \textbf{Zsigmondy factors}.
The specific case $a=2, b=1$ was already solved by Bang's Theorem (see~\cite{B1886,B1886_cont}).

\subsubsection{Evaluation at a rational}

For compactness we introduce a variant of function call notation.  Let
$f \in \ZZ[x]$ be non-zero of degree $d$, and let $\beta = p/q \in
\QQ_{>0}$ with $p$ and $q$ positive and coprime; then we write
\[
f(\beta)_{num} \;=\; q^d f(\beta);
\]
and if $f$ is zero then we define $f(\beta)_{num} = 0$.
Thus for all $f \in \ZZ[x]$ and all $\beta \in \QQ_{>0}$ we have $f(\beta)_{num} \in \ZZ$.
Also if $\beta \in \ZZ$ then $f(\beta)_{num} = f(\beta)$.

\begin{lemma}
\label{lem:Zsigmondy-for-numerators}
Let $\beta = p/q \in \QQ_{>1}$ with $p$ and $q$ coprime and positive.
Let the sequence $\sigma_k = p^k-q^k$; applying Zsigmondy's
theorem we obtain a Zsigmondy factor $z_k$ for each index
$k \in \NN_{>0}$~---~with the noted exceptions.
Let the sequence $\tau_k = \Phi_k(\beta)_{num}$.  Then the $z_k$ are
Zsigmondy factors for the $\tau$ sequence.
\end{lemma}

\begin{proof}
By 
Lemma~\ref{lem:cyclo-prods}(a),
we have $\sigma_k = \prod_{\kappa \mid k} \tau_\kappa$ for every $k \in \NN_{>0}$;
thus $\tau_k \mid \sigma_k$.  Since each $z_k \mid \sigma_k$ and
$z_k \notdiv \sigma_\kappa$ for any $\kappa < k$, we have that $z_k \mid \tau_k$
and $z_k \notdiv \tau_\kappa$ for any $\kappa {<} k$.  So the $z_k$ are Zsigmondy factors
for the sequence $\tau_k$.
\end{proof}

\subsubsection{Algorithm}
\label{sec:algm-cyclo-factors}

We present our algorithm in two parts: the inner Algorithm~\ref{algm:RefineCandidates}
(called \textbf{RefineCandidates}) which refines a list of candidate indexes,
and the outer Algorithm~\ref{algm:FindCycloFactors} (called \textbf{FindCycloFactors}) that calls the inner one
repeatedly.

The inner ``refinement'' algorithm takes 3 inputs: the polynomial $f$,
the evaluation point $\beta \in \QQ_{>1}$, and a list of candidate
indexes.  It produces a refined list of candidate indexes, which might
be empty.

\begin{algorithm}[!ht]  
\caption{\textbf{(RefineCandidates)}}
\label{algm:RefineCandidates}
\begin{algorithmic}[1]
    \Require Polynomial $f$, evaluation point $\beta = p/q \in \QQ_{>1}$, and $L$ a list of candidate indexes (excluding $1$ and $2$)
    \Ensure The refined list $L_{new} \subseteq L$ of candidate indexes
    \Statex
    \State $N \gets \gcd\bigl(f(\beta)_{num},\, f(\beta^{-1})_{num}\bigr)$ \label{step:refine-init}
    \Statex \Comment{if $f$ is known to be palindromic: $N \gets f(\beta)_{num}$}
    \IIf{$N = 0$}  $f \gets f/(qx-p)$; \Goto{step:refine-init}\EndIIf
    \State $L_{new} \gets [\,]$, an empty list
    \ForEach{index $k \in L$}
    \IIf{$\totient(k) > \deg(f)$} \Continue{} to next $k$\EndIIf
    \State $g \gets \gcd(p^k-q^k, N)$
    \IIf{$g=1$} \Continue{} to next $k$ \EndIIf
    \IIf{$\Phi_k(\beta)_{num} \notdiv N$} \Continue{} to next $k$ \EndIIf \label{step:skip-cyclo}
    \State Append $k$ to $L_{new}$
    \State $N \gets N/\gcd(N,g^\infty)$  \Comment{See Remark~\ref{rem:saturation}}
    \EndForEach
    \State \Return $L_{new}$
\end{algorithmic}
\end{algorithm}

\begin{remark}
  \label{rem:saturation}
  In Algorithm~\ref{algm:RefineCandidates}~(\textbf{RefineCandidates}) at
  Step~10 we write $N/\gcd(N,g^\infty)$ to mean $\lim_{s \to \infty} N/\gcd(N,g^s)$;
  it is the largest factor of $N$ coprime to $g$.
\end{remark}


The outer algorithm takes just the polynomial $f$ as input; it could
also perform the preprocessing steps mentioned earlier. It checks
explicitly for the factors $\Phi_1$ and $\Phi_2$, so that calls to the
inner algorithm are not complicated by the exceptions in Zsigmondy's
theorem. It always
chooses~$2$ as the first evaluation point (for reasons of computational
speed); thereafter it picks random small rationals.  The randomness serves
to preclude the possibility of constructing a \emph{small} input which
\emph{consistently} causes the algorithm to perform poorly~---~this is standard practice in randomized algorithms.

\clearpage
\begin{algorithm}[!ht]  
\caption{\textbf{(FindCycloFactors)}}
\label{algm:FindCycloFactors}
\begin{algorithmic}[1]
    \Require Primitive, non-constant polynomial $f \in \ZZ[x]$
    \Ensure A list of indexes of cyclotomic factors of $f$, possibly containing a few false positives
    \Statex
    \State Optional: Apply preprocessing steps from beginning of Section~\ref{sec:cyclo-factors}
    \State $L_{12} \gets [\,]$. If $f(1) = 0$, append $1$ to $L_{12}$. 
    If $f(-1) = 0$, append $2$ to $L_{12}$. 
    \State $L \gets$ a list containing all $k \in \NN_{>2}$ for which $\totient(k) \le \deg(f)$ \label{step:tot-filter}
    \State $L \gets \textbf{RefineCandidates}(f,2,L)$ \Comment{see Algorithm~\ref{algm:RefineCandidates}}
    \Repeat
    \State Pick a random evaluation point $\beta \in \QQ_{>1}$
    \State $L_{new} \gets \textbf{RefineCandidates}(f,\beta,L)$ \Comment{see Algorithm~\ref{algm:RefineCandidates}}
    \IIf{$L_{new}=[\,]$ or $L_{new} = L$} \Return $L_{12} \cup L_{new}$ \EndIIf \label{step:list-stable}
    \State $L \gets L_{new}$, and perform another iteration \label{step:next-iter}
    \Until{forever}
\end{algorithmic}
\end{algorithm}

\begin{remark}
If the input polynomial is monic and of even degree, we can perform,
before Step~\ref{step:tot-filter}, an initial cyclotomicity test (from
Section~\ref{sec:test-cyclo}) as this is very quick.  The initial
refinement at $\beta=2$ was quite effective in our tests: \ie~only few
false positives survived.  As written, Algorithm~\ref{algm:FindCycloFactors}~(\textbf{FindCycloFactors})
has difficulty excluding the indexes $3,4$ and~$6$; so
in Step~\ref{step:list-stable}, the first time the condition $L_{new} = L$ is satisfied,
we perform one more iteration but using a $\beta$ value chosen so that
each of $\Phi_k(\beta)_{num}$ for $k=3,4,6$ has a prime factor larger
than $10\,000$, \eg~suitable $\beta$ values are $\frac{117}{98}$, $\frac{133}{18}$ or $\frac{169}{6}$.
\end{remark}

\begin{remark}
An advantage of evaluating at rationals rather than just at integers is that the
evaluations are usually smaller.  There are $\Theta(B^2)$ reduced rationals with numerator and
denominator bounded by $B$. Let $\beta$ be such a rational and $f\in\ZZ[x]$ of degree $d$,
then $f(\beta)_{num}$ is bounded by $(d{+}1) \, \CoeffHeight(f) \, B^d$.
In contrast, we have only about $2B$ integers
giving evaluations below this bound.
\end{remark}

\begin{remark}
Let $f = \sum_{j=0}^d a_j x^j$ be non-constant. 
Let $p$ be a prime which divides $a_0, a_1, \ldots, a_r$ and $a_d, a_{d-1}, \ldots, a_{d-s}$;
note we do not need to compute $p$, it suffices to verify that $\gcd(a_0, a_1, \ldots, a_r,\; a_d, a_{d-1}, \ldots, a_{d-s}) \neq 1$.
Suppose $\Phi_k \mid f$, then $\psi_p(\Phi_k) \mid x^{-(r{+}1)}\, \psi_p(f)$ where $\psi_p$ is reduction modulo $p$.
Then we obtain the better bound for $\totient(k) = \deg(\Phi_k) \le \deg(f) -(r{+}s{+}2)$ in
Step~\ref{step:tot-filter} of Algorithm~\ref{algm:FindCycloFactors}~(\textbf{FindCycloFactors}).  If $f$ is palindromic
then we have $r=s$.

\end{remark}

\begin{remark}
We use a simple technique for generating ``random rationals''.  We
regard the rationals in $\QQ_{>1}$ as being ordered lexicographically
via the mapping $p/q \mapsto (p,q)$ with $\gcd(p,q)=1$.  A new rational is
chosen by jumping forward along this progression by a random amount;
by always jumping forward, we avoid generating the same value twice.
\end{remark}

\subsection{Timings}
\label{sec:TimingsCyclotomicFactors}

We exhibit timings for the ``difficult'' case of many cyclotomic
factors, since usually the first iteration whittles the list down to
just those factors actually present, possibly with just very few false
positives.

Each test polynomial is a product of distinct cyclotomic polynomials whose indexes
come from a random subset of specified cardinality from the range $\{1,2,\ldots,R\}$.
Each test set contains 10 such products.
The table records the average degree, and the average time;
no false positives were observed during testing.

\begin{table}[!ht]
    \centering
    \caption{Timings for \texttt{CyclotomicFactorIndexes}}
    \renewcommand{\arraystretch}{1.1}
    \begin{tabular}{|S[table-format=4.0]|S[table-format=3.0]|S[table-format=5.0]|S[table-format=1.2]|}
        \hline
        {Index range}   & {Num Factors}   & {Avg degree}    & {Avg Time}          \\
        \hline
        500 & 100 &  15731   & 0.10 \\
        1000 & 50 &  15696   & 0.10 \\
        1000 & 100 & 31487   & 0.22 \\
        1000 & 200 & 61444   & 0.50 \\  
        \hline
    \end{tabular}
\end{table}

We did observe a few false positives when using specially constructed square-free,
palindromic polynomials such as $f = \prod_{k=2}^{201} (kx-1)(x-k)$ which have
large ``fixed divisors''; the \emph{fixed divisor} is defined to be
$\gcd \{ f(n) \mid n \in \ZZ\}$.  In all cases the false positives were
indexes of cyclotomic polynomials of low degree (\eg~$3$, $4$ and $6$).

\section{Algorithms for Testing LRS-degeneracy}
\label{sec:TestLRS}

In this section we show that many LRS-degenerate polynomials exist,
recall the algorithms presented in~\cite{CDM11}, and then present our
new algorithm together with some sample timings.

\subsection{Some properties of LRS-degenerate polynomials}

We first show that infinitely many LRS-degenerate polynomials exist.
Indeed, every member of the family of polynomials $\{ x^2-n \mid n \in \ZZ_{\neq 0}\}$
is 2-LRS-degenerate; moreover, infinitely many of them are irreducible.
In fact, all the polynomials in the family are LRS-equivalent to each other.

Directly from the definition, we can easily see that:
\begin{itemize}
\item If $f \in \ZZ[x]$ is of the form $g(x^k)$ for some $k \in \ZZ_{\ge 2}$
  but not of the form $a_d x^d$ then $f$ is $\kappa$-LRS-degenerate for
  every $\kappa \ge 2$ dividing $k$~---~we note that $f$ may also be
  LRS-degenerate for other orders too.

\item If $f \in \ZZ[x]$ is $k$-LRS-degenerate then so is any (non-zero) multiple of~$f$.

\item Let $f \in \ZZ[x]$ be divisible by $x^d$ for some exponent $d \in \NN$ then $f$ is $k$-LRS-degenerate iff
  $f/x^d$ is.
\end{itemize}
We observe that, for a given $f \in \ZZ[x]$ not of the form $a_d x^d$, it is computationally
quick and simple to determine the largest $d,k \in \NN$ such that $f(x) = x^d g(x^k)$.
If $d$ and $k$ are both even then $f$ is an
even polynomial (\ie~$f(x) = f(-x)$); and if $d$ is odd and $k$ even then $f$ is an odd
polynomial (\ie~$f(x) = -f(-x)$).  We see that in both cases they are $2$-LRS-degenerate;
of course, this is also immediate from the definitions of evenness and oddness.

There are many further examples.  We mention two further quadratic
polynomials: $x^2+3x+3$ is 6-LRS-degenerate, and $x^2+x+1 = \Phi_3(x)$
is 3-LRS-degenerate.  Lemma~\ref{lem:cyclo-LRSd} immediately below summarises the
LRS-degeneracy of cyclotomic polynomials.  We can generate more
LRS-degenerate polynomials as follows: let $f$ be LRS-degenerate
and $g$ be not of the form $a_d x^d$ then $\res_y(f(y),\, g(xy))$ is
LRS-degenerate since its roots are $\{\alpha/\beta \mid f(\alpha)=0,\; g(\beta)=0\}$;
indeed, if $f$ and $g$ are irreducible then the resultant is likely
irreducible too (provided $g$ remains irreducible over the extension
generated by a root of $f$).

\begin{lemma} (cyclotomics are LRS-degenerate)
\label{lem:cyclo-LRSd}
  \begin{itemize}
    \item[(a)]
      For odd $k \in \NN_{\ge 3}$ the polynomial $\Phi_k(x)$ is
      $\kappa$-LRS-degenerate for every factor $\kappa$ of $k$
  (excluding $\kappa=1$).

 \item [(b)] For even $k \in \NN_{\ge 4}$ the polynomial $\Phi_k(x)$ is $\kappa$-LRS-degenerate
  for every factor $\kappa$ of $k/2$ (excluding $\kappa=1$).
  \end{itemize}
\end{lemma}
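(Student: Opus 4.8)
The plan is to work directly with the roots of $\Phi_k$, which by Definition~\ref{def:cyclo} are exactly the primitive $k$-th roots of unity. Fix one such root $\zeta$; then the roots of $\Phi_k$ are the powers $\zeta^a$ with $\gcd(a,k)=1$, and the ratio of two of them, $\zeta^{a}/\zeta^{b}=\zeta^{a-b}$, has order $k/\gcd(a-b,k)$, hence is a primitive $\kappa$-th root of unity precisely when $\gcd(a-b,k)=k/\kappa$. Writing $m=k/\kappa$ (so $1\le m<k$, since $\kappa\ge 2$), it therefore suffices, in both parts, to exhibit an integer $b$ with $\gcd(b,k)=\gcd(b+m,k)=1$: then, by Definition~\ref{def:kLRSd}, $\alpha=\zeta^{b+m}$ and $\beta=\zeta^{b}$ are roots of $\Phi_k$, they are distinct because $0<m<k$, and $\alpha/\beta=\zeta^{m}$ is a primitive $\kappa$-th root of unity.

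Next I would reduce the existence of such a $b$ to a local condition via the Chinese Remainder Theorem. Coprimality only sees primes, so $\gcd(b,k)=\gcd(b+m,k)=1$ holds iff for every prime $p\mid k$ we have $b\not\equiv 0\pmod p$ and $b\not\equiv -m\pmod p$. If $p\mid m$ these two conditions coincide, leaving $p-1\ge 1$ admissible residue classes mod $p$; if $p\mid k$ but $p\nmid m$, then $-m\not\equiv 0\pmod p$, so we must avoid two distinct classes, leaving $p-2$ admissible classes. Hence a suitable $b$ exists (pick an admissible class modulo each prime dividing $k$ and combine) provided every prime $p\mid k$ with $p\nmid m$ satisfies $p\ge 3$.

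It remains to check this proviso in each case, and this is where the hypotheses enter. In part~(a) the integer $k$ is odd, so every prime dividing $k$ is odd and there is nothing to verify. In part~(b) the hypothesis $\kappa\mid k/2$ means $k=2\kappa s$ for some positive integer $s$, hence $m=k/\kappa=2s$ is even; thus no prime $p\mid k$ with $p\nmid m$ can equal $2$, and again the proviso holds. Assembling the pieces: obtain $b$ by the Chinese Remainder Theorem, set $\alpha=\zeta^{b+m}$ and $\beta=\zeta^{b}$, and conclude that $\Phi_k$ is $\kappa$-LRS-degenerate.

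I do not expect a genuine obstacle here; the argument is elementary once it is set up this way. The single point worth isolating in the write-up is the observation that the two hypotheses are exactly what removes the obstruction at the prime $2$: when $2\mid k$ but $2\nmid m$ there is no admissible residue, and the construction must fail. This is consistent with the exclusions in the statement~---~for instance $\kappa=k=4$ divides $k$ but not $k/2$, and indeed $\Phi_4=x^2+1$ is not $4$-LRS-degenerate, since the only ratios of its roots $\pm i$ are $\pm 1$~---~so the bounds in~(a) and~(b) are sharp.
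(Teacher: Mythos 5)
Your proof is correct and follows essentially the same route as the paper's: identify the roots of $\Phi_k$ with exponents coprime to $k$, aim for the ratio $\zeta^{k/\kappa}$, and use CRT to build an exponent avoiding the residues $0$ and $-k/\kappa$ modulo each prime dividing $k$, with the hypotheses in (a) and (b) exactly neutralising the obstruction at the prime $2$. Your write-up is if anything a bit more explicit than the paper's (which merely says the even case is ``essentially the same argument''), and the sharpness remark about $\Phi_4$ is a nice addition, but there is no substantive difference in method.
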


\begin{proof}
There is a 1--1 correspondence between primitive $k$-th roots of unity and
$S_k = \{ n \in \NN \mid 1 \le n < k  \text{ and }  \gcd(n,k)=1 \}$.
Namely, the set of all primitive $k$-th roots is just $\{\zeta^n \mid n \in S_k\}$
where $\zeta$ is any primitive $k$-th root.

Let $\kappa > 1$ be a factor of $k$, and let $\lambda = k/\kappa$.
Then $\zeta^{j+\lambda} / \zeta^j$ is a primitive $\kappa$-th root of unity.
We show how to find $j$ such that both $j$ and $j+\lambda$ are coprime
to $k$, \ie~so that both $\zeta^j$ and $\zeta^{j+\lambda}$ are primitive
$k$-th roots of unity.

Let the factorization of $k$ be $\prod_{s=1}^m p_s^{e_s}$ where the primes
$p_s$ are distinct, and each $e_s > 0$.  We now construct a pair of suitable exponents $(i,j)$.


\textbf{Case $k$ is odd.}
For each index $s$, pick non-zero residues $r_s \bmod p_s$ such
that $r_s {+} \lambda \not\equiv 0 \pmod{p_s}$; note that all $p_s > 2$, so a suitable $r_s$ exists.
Using CRT construct $j \in \{1,2,\ldots,k-1\}$ such that $j \equiv r_s \pmod{p_s}$.
Then $\gcd(j,k) = 1$ and $\gcd(j{+}\lambda,k)=1$.
Thus $\zeta^j$ and $\zeta^{j+\lambda}$ are primitive $k$-th roots,
while $\zeta^\lambda = \zeta^{j+\lambda} / \zeta^j$ is a primitive
$\kappa$-th root.

\textbf{Case $k$ is even.}
Essentially the same argument works, but
we must ensure that $\lambda$ is even; equivalently we must
require that $\kappa$ divides $k/2$.
\end{proof}

\begin{example}
  \label{ex:not-LRSd-with-order-a-factor}
  The property that if $f$ is $k$-LRS-degenerate then it is $\kappa$-LRS-degenerate for
  all factors $\kappa > 1$ dividing $k$ does not hold in general.  For example,
  $x^4 + 2 x^2 + 4 x + 2$ is $k$-LRS-degenerate only for $k=8$;
and $x^6 +3x^5 +6x^4 +6x^3 +3$ is $k$-LRS-degenerate only for $k=18$.  

\end{example}

\vspace*{-17pt} 
\subsection{Preprocessing prior to testing LRS-degeneracy}
\label{subsec:LRS-Preprocessing}
\vspace*{-6pt} 

To simplify later discussions we shall assume that polynomials to be
tested for LRS-degeneracy have been preprocessed.
In~\cite{CDM11} they describe some natural, simple preprocessing steps
which we recall and expand here:
\begin{itemize}
  \item we may assume that $f(0) \neq 0$: just divide by appropriate power of $x$
  \item we may assume that $f$ is non-constant and content-free in $\ZZ[x]$  
  \item we may assume that $f$ is square-free: just replace $f \gets f/\gcd(f,f')$
    \item we may assume that $f$ is not of the form $g(x^r)$ for some $r > 1$:\\
      if $f(x) = g(x^r)$ then $f$ is $k$-LRS-degenerate for each
      factor $k>1$ of $r$; if additionally $g(x)$ is $\kappa$-LRS-degenerate
      then $f$ is $r \kappa$-LRS-degenerate; $f$ may also be $k$-LRS-degenerate
      for other factors $k \mid r \kappa$.

  \item Optionally, we may also simplify the coefficients using Algorithm~\ref{algm:ReduceCoefficients} (\textbf{ReduceCoefficients}) from
  Section~\ref{subsubsec:CoeffRedn}.
\end{itemize}

If we are interested in knowing only whether $f$ is LRS-degenerate,
without actually determining the orders, then we can apply
the following additional steps:
\vspace*{-8pt} 
\begin{itemize}
  \item we may assume that $\gcd(f(x), f(-x)) = 1$: otherwise $f$ is clearly $2$-LRS-degenerate
  \item we may assume that $f$ has no cyclotomic factors (with index $>2$): otherwise it is trivially LRS-degenerate~---~\eg~use the method of Section~\ref{sec:cyclo-factors}
  \item we may also divide out any linear factors from $f$. 
\end{itemize}

\vspace*{-15pt} 
\subsubsection{Preprocessing: coefficient reduction}
\label{subsubsec:CoeffRedn}
\vspace*{-8pt} 

We can use LRS-degeneracy equivalence to preprocess polynomials to be tested for
LRS-degeneracy: the idea is to look for $\lambda$ and $\mu$ values
which ``simplify'' the coefficients.  A fully general approach appears
to be potentially costly, probably entailing integer factorization.
However, we can make a quick search for suitable $\lambda, \mu$; if we
are lucky, we can simplify the polynomial.

Algorithm~\ref{algm:ReduceCoefficients}~(\textbf{ReduceCoefficients})
is a simple preprocessing method which produces an LRS-degeneracy
equivalent polynomial whose coefficient size may be smaller than that
of the input $f$.  We run this algorithm twice: apply once to $f$ to
obtain $f_1$, then apply again to $\rev(f_1)$ to obtain $f_2$.  The
fully simplified polynomial is then $\rev(f_2)$.

\begin{remark}
  If the input is $f = a_d x^d + a_0$, with $a_d \, a_0 \neq 0$, then we can directly simplify to $x^d+1$.
  As presented, the algorithm will fail to fully simplify $f$ if it is unable to
  factorize $a_0$ or $a_d$.
\end{remark}

\begin{example}
  \label{ex:preprocessing-coeff-reduction}
  Let $f = 16x^4 +80x^3 +300x^2 +1000x +3125$.  The first run of the algorithm
  gives $\lambda = 1/2$ and $f_1 = x^4 +10x^3 +75x^2 +500x +3125$.
  The second run, on input $\rev(f_1)$, gives $\lambda = 1/5$
  and $f_2 = 5x^4+4x^3+3x^2+2x+1$.  So the final simplified form of $f$ is
  $\rev(f_2) = x^4 +2x^3 +3x^2 +4x +5$ and the combined $\lambda$ is $5/2$.
\end{example}

\begin{algorithm}[!ht]  
\caption{\textbf{(ReduceCoefficients)}}
\label{algm:ReduceCoefficients}
\begin{algorithmic}[1]
    \Require Content-free polynomial $f = \sum_{j=0}^d a_j x^j \in \ZZ[x]$ with $d > 0$ and $a_0,a_d \neq 0$
    \Ensure An LRS-degeneracy equivalent polynomial with ``reduced'' coefficients
    \Statex
    \IIf{$f(x) = g(x^r)$ for some $r > 1$} apply this algorithm to $g(x)$ producing $\tilde{g}(x)$; \Return $\tilde{g}(x^r)$ \EndIIf
    \State $g \gets \gcd(a_1, a_2,\ldots,a_d)$  \Comment{$a_0$ is deliberately excluded}
    \IIf{$g = 1$} \Return $f$ \EndIIf
    \State $\lambda \gets 1$
    \ForEach{``small'' prime factor $p$ of $g$}
    \State $m \gets \min\{\lfloor \mu_p(a_j)/j \rfloor \mid j=1,2,\ldots,d\}$
    \Statex \Comment{$\mu_p(n)$ is the multiplicity of $p$ in $n$; note that $m \in \ZZ$}
    \State $\lambda \gets \lambda / p^m$
    \EndForEach
    \State \Return $f(\lambda x)$ \Comment{element of $\ZZ[x]$}
\end{algorithmic}
\end{algorithm}

\vspace*{-22pt} 
\subsection{Factorization and LRS-degeneracy}
\vspace*{-10pt} 

It is clear from the definition that when testing a polynomial $f$ for
LRS-degeneracy we can work with the radical, $\rad(f)$, which may have
lower degree.  However we cannot, in general, use a finer
factorization.  We present an easy lemma, and then several examples
which show that we cannot determine whether $f$ is LRS-degenerate by
working independently on its irreducible factors.

\begin{lemma}
  Let $g(x) \in \CC[x]$ be not LRS-degenerate then $h(x) = g(x) \, g(-x)$ is
  $k$-LRS-degenerate only for $k=2$.
\end{lemma}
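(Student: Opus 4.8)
The plan is to prove the statement directly in the sharper form: if $h$ is $k$-LRS-degenerate for some $k \in \NN_{\ge 2}$, then $k = 2$. The key reformulation I would set up at the start is that, since $g$ is not LRS-degenerate, $g$ has no two \emph{distinct} roots whose ratio is a root of unity. Indeed, a ratio that is a root of unity has some order $m$: if $m = 1$ the two roots coincide (so they are not distinct), and if $m \ge 2$ then that ratio is a primitive $m$-th root of unity, making $g$ be $m$-LRS-degenerate, contrary to hypothesis. I will invoke this observation three times.

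Next I would record the root structure of $h$: writing $R \subseteq \CC$ for the set of roots of $g$, the roots of $h(x) = g(x)\,g(-x)$ are exactly $R \cup (-R)$, since $g(-x)$ has root set $\{-\rho : \rho \in R\}$. Now suppose $h$ is $k$-LRS-degenerate, witnessed by distinct roots $\alpha, \beta$ of $h$ with $\alpha/\beta = \zeta$ a primitive $k$-th root of unity (both are automatically nonzero). I would then split into cases according to which of $g$, $g(-x)$ contributes $\alpha$ and $\beta$. If $\alpha, \beta \in R$, then $g$ itself is $k$-LRS-degenerate, a contradiction. If $\alpha, \beta \in -R$, then $-\alpha, -\beta$ are distinct roots of $g$ with ratio $(-\alpha)/(-\beta) = \zeta$, the same contradiction. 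This leaves the mixed case, which is the crux: replacing $(\alpha,\beta)$ by $(\beta,\alpha)$ if necessary (this only inverts $\zeta$, still a primitive $k$-th root of unity), we may assume $\alpha \in R$ and $\beta = -\beta'$ with $\beta' \in R$. Then $\alpha/\beta' = -\zeta$, which is again a root of unity; by the reformulation above this forces $\alpha = \beta'$, hence $-\zeta = 1$, i.e.\ $\zeta = -1$, and therefore $k = 2$.

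The only part that needs care when written out is the sign bookkeeping in the mixed case, together with the elementary observation that $-\zeta = 1$ is equivalent to $\zeta = -1$, equivalently $k = 2$; the other two cases are immediate. I would also add a remark that the degenerate possibility $g = a_d x^d$ is harmless: then $h$ is again a monomial, with the single root $0$, hence not $k$-LRS-degenerate for any $k$, so the conclusion ``only for $k = 2$'' holds vacuously. (Conversely, when $g$ is not a monomial, $h$ is even and not of the form $a_d x^d$, so it genuinely is $2$-LRS-degenerate by the observation recalled earlier in this section — but that is not needed for the statement.)
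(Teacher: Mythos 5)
Your proposal is correct and follows essentially the same route as the paper: the same case split according to which of $g(x)$, $g(-x)$ supplies the two witness roots, with the mixed case handled via the ratio $-\zeta$ (the paper phrases it as a contrapositive, you argue directly). Your explicit treatment of the possibility $\alpha=\beta'$ even makes precise a distinctness point the paper leaves implicit in its appeal to $k\neq 2$.
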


\begin{proof}
We show the contrapositive.

Suppose that $h(x)$ is $k$-LRS-degenerate for some $k \neq 2$.
Then $h(x)$ has roots $\alpha,\beta \in \CC$ such that $\alpha/\beta = \zeta_k$.  WLOG $g(\alpha) = 0$.

  If $g(\beta)=0$ then $g$ is $k$-LRS-degenerate.  Otherwise we have $g(-\beta) = 0$;
  so $g(x)$ has a root pair with ratio $-\alpha/\beta = -\zeta_k$,
  and hence $g(x)$ is LRS-degenerate (with order either $k/2$ or $2k$).
\end{proof}

\begin{remark}
It is easy to see that  if $f(x)\, g(x)$ is LRS-degenerate then so is $f(x)\, g(-x)$.
Note that $f(-x)\, g(x)$ is LRS-degeneracy equivalent to $f(x)\, g(-x)$,
and $f(-x)\, g(-x)$ is LRS-degeneracy equivalent to $f(x)\, g(x)$.
\end{remark}

\begin{example}
  Here we exhibit some products of non-LRS-degenerate polynomials which are $k$-LRS-degenerate for $k \neq 2$.
  Let $f_1 = 5x^2 +6x +5$ and $f_2 = 5x^2 +8x +5$.  Then neither $f_1$
  nor $f_2$ is LRS-degenerate, but the product $f_1 f_2$ is
  $4$-LRS-degenerate.

  Let $g_1 = 7x^2+2x+7$, $g_2 = 7x^2+11x+7$ and $g_3 = 7x^2+13x+7$.
  None of these polynomials is LRS-degenerate, but the product of any pair
  is:  $g_1 g_2$ is $3$-LRS-degenerate, while $g_1 g_3$ and $g_2 g_3$
  are $6$-LRS-degenerate.

  We can generate more examples of degree 2 by applying Graeffe
  transforms: \eg~$G_5(f_1)$ and $G_5(f_2)$ are not LRS-degenerate but
  their product is 4-LRS-degenerate.  This works for any $G_k$ with
  $k$ coprime to 4. Analogously, if we use the pair $g_1,g_2$ then we
  can apply $G_k$ with $k$ coprime to 3; for the pairs $g_1,g_3$ and
  $g_2,g_3$ we need $k$ coprime to $6$.

  We can generate similar examples with higher degrees, for instance:
  let $h$ be an irreducible polynomial which is not LRS-degenerate, and
  compute $h_i =  \res_y(f_i(y), h(xy))$ for $i=1,2$.  Most likely $h_1$ and $h_2$ are
  irreducible and not LRS-degenerate, but their product is $4$-LRS-degenerate.
\end{example}

\begin{example}
We can generate more pairs of non-LRS-degenerate polynomials whose
product is LRS-degenerate as follows.  Pick an index $k \ge 7$, and a
random polynomial $g \in \ZZ[x]$ with $\deg(g) < \totient(k)$.
Compute $m(y) = \res_x(\Phi_k(x),\, y-g(x))$; if this is reducible,
pick a different $g$.  Let $S$ be the set of irreducible factors of
$\res_y(m(y), \Phi_k(xy))$; then in many cases the factors are not
LRS-degenerate, but the product of any pair is $\kappa$-LRS-degenerate
for some $\kappa > 1$ dividing $k$.
\end{example}


\subsection{Recalling CDM Algorithm 1}
\label{algm:CDM1}

We recall briefly the two algorithms for detecting LRS-degenerate
polynomials which were published in~\cite{CDM11}.  The
presentation in that paper was mostly concerned with determining solely whether
a given polynomial is LRS-degenerate, rather than determining the
LRS-degeneracy orders.

These algorithms assume that the input $f \in \ZZ[x]$ has been
preprocessed (see Section~\ref{subsec:LRS-Preprocessing}) so that it
has degree at least $2$, and is content-free, square-free and $f(0) \neq
0$.

The first algorithm in~\cite{CDM11} computes $R_f =
\res_y(f(y), f(xy))/(x-1)^d$ where $d = \deg(f)$.  So $R_f$ is a
palindromic polynomial of degree $d^2-d$, since we assume that $f$ is
square-free.  It then determines whether $R_f$ has any cyclotomic
factors (with index $\ge 2$), since $f$ is $k$-LRS-degenerate
iff $\Phi_k(x)$ divides $R_f$.  In~practice, they
observe that this method becomes rather slow for $\deg(f) > 25$
because the resultant computation is costly.  Also they sought
cyclotomic factors by first computing a factorization into
irreducibles, which can be slow in some cases.  Our Algorithm~\ref{algm:FindCycloFactors}~(\textbf{FindCycloFactors})
or that from~\cite{SB02} would make the detection of cyclotomic
factors much faster, but would not reduce the cost of computing the resultant.
Our Algorithm~\ref{algm:FindCycloFactors}~(\textbf{FindCycloFactors}) produces also the orders of LRS-degeneracy.

\subsection{Recalling CDM Algorithm 2}
\label{algm:CDM2}

The second algorithm in~\cite{CDM11} replaces the costly resultant computation
by a succession of simpler resultant computations.  Effectively it searches for the lowest
order of LRS-degeneracy.  They report that it is usefully faster than their first algorithm if the polynomial
is indeed $k$-LRS-degenerate for some small $k$~---~but if it is not LRS-degenerate then
their first algorithm is often faster.

Recall that we assume that $\gcd(f(x),f(-x))$ is constant, so $f$ is
not 2-LRS-degenerate.  They try all candidate orders
$k=3,4,\ldots,5d^2$: for each candidate compute $R_{f,k} =
\res_y(f(y), y^k-x)$, and test whether $R_{f,k}$ is square-free; if not
then $f$ is $k$-LRS-degenerate.  We note that $R_{f,k} = G_k(f)$, the
$k$-th Graeffe transform of $f$.

\begin{remark}
For some indexes $k$ we have that $\deg(\Phi_k) \ll k$, so we can consider
computing $R^*_{f,k} = \res_y(f(xy), \Phi_k(y))$ and checking
whether it is square-free.  For such $k$ the advantage is that $R^*_{f,k}$ has rather lower degree
than $R_{f,k}$.  In practice, the cost of computing $R^*_{f,k}$ was sometimes higher than
the cost of computing $R_{f,k}$ because $y^k-x$ has a simple, sparse structure whereas $\Phi_k(y)$
is typically not sparse.
\end{remark}

\begin{remark}
The upper bound for the loop, $5d^2$, is actually a bound on the
inverse of Euler's totient function for the degrees they were able to
handle (see \cite{BD89}).  The basis for Algorithm~1 of \cite{CDM11}
tells us that if $f$ is $k$-LRS-degenerate then $\Phi_k \mid R_f$, thus
by considering degrees, $\totient(k) \le d^2-d$; whence the upper
bound (for $d \le 75$).  As we already mentioned, sequence \texttt{A355667} at
OEIS~\cite{OEIS} implies a dynamic bound, which is needed when $d > 75$.
Our Section~\ref{sec:conjectures} introduces a conjecture which, if true,
would let us use a much lower bound.
\end{remark}

\subsection{The modular algorithm}
\label{subsec:ModularAlgorithm}

We now present our modular approach whose existence was hinted at
in~\cite{CDM11}.  Our approach employs ``sophisticated
brute force'', and comprises two parts: one testing whether a
polynomial is $k$-LRS-degenerate for a specific order $k$, the other
choosing which orders $k$ to test.  Recall that the input is
content-free, square-free $f \in \ZZ[x]$ with $f(0) \neq 0$,
degree $\ge 2$ and $\gcd(f(x),f(-x)) = 1$.  The following
subsections give further details and justifications for our new
Algorithm~\ref{algm:LRSDegeneracyOrders}~(\textbf{LRSDegeneracyOrders}).

\begin{remark}
In Algorithm~\ref{algm:LRSDegeneracyOrders}~(\textbf{LRSDegeneracyOrders})
  the loop from Step~\ref{step:prime-amount} to Step~\ref{step:loop-end}
  performs~$3$ iterations because that gave a good compromise
  between speed of computation and exclusion of false positives.
\end{remark}

\subsection{The modular algorithm: supporting lemma}
\label{sec:modular-algm-supporting-lemma}

\begin{lemma}
\label{lemma:fieldhom}
Let $f \in \ZZ[x]$ be $k$-LRS-degenerate for some order $k \in \NN_{\ge 2}$.
Let $\FF_q$ be a finite field containing a primitive $k$-th root of unity, $\zeta_k$.
Let $\psi:\ZZ \to \FF_q$ be the canonical ring homomorphism,
with natural extension to $\psi: \ZZ[x] \to \FF_q[x]$.
Let $g = \psi(f)$, and suppose that $\deg(g) = \deg(f)$ then
$\deg \bigl(\gcd(g(x), g(\zeta_k x)) \bigr) \ge 1$ in $\FF_q[x]$.
\end{lemma}

\begin{proof}
Let $\ZZ[\xi_k]$ be the ring extension of $\ZZ$ by a $k$-th
root of unity, $\xi_k$.  We have a unique ring homomorphism $\theta: \ZZ[\xi_k] \to \FF_q$ sending
$\xi_k$ to $\zeta_k$, which extends naturally to $\theta: \ZZ[\xi_k][x] \to \FF_q[x]$; observe that $\theta|_{\ZZ[x]} = \psi$.  Regarding $\ZZ[x]$ as embedded in the ring
$\ZZ[\xi_k][x]$, let $h = \gcd(f(x), f(\xi_k x))$, so by assumption $\deg(h) \ge 1$.  Since $\deg(g)=\deg(f)$, we have also $\deg(\theta(h)) = \deg(h)$.  By construction $h(x) \mid f(x)$ and $h(x) \mid f(\xi_k x)$, so
$\theta(h(x)) \mid \theta(f(x))$ and $\theta(h(x)) \mid \theta(f(\xi_k x))$.
Now $\theta(f(x)) = g(x)$ and $\theta(f(\xi_k x)) = g(\zeta_k x)$; thus
$\theta(h(x)) \mid \gcd(g(x), g(\zeta_k x))$.
\end{proof}

We are especially interested in the case where $q = 1+rk$ is prime.
In this case, all primitive $k$-th roots of unity are elements of $\FF_q$.
Let $Z \subset \FF_q$ be the set of these primitive roots.  By the lemma, for each $\zeta_k \in Z$ we
have $\deg \bigl(\gcd(g(x),\, g(\zeta_k x)) \bigr) \ge 1$ in $\FF_q[x]$.

\subsection{Test for \texorpdfstring{$k$}{k}-LRS-degeneracy}

Here we present our sub-algorithm for testing whether $f$ is
$k$-LRS-degenerate: this corresponds to the loop controlled by Step~\ref{step:loop-test-lrsd}.
We assume that $k > 2$ since 2-LRS-degeneracy is easy to test.  Our
test is one-sided: it may produce a \emph{false positive}, namely,
report that $f$ is $k$-LRS-degenerate when it is actually not.
However, if $f$ is not $k$-LRS-degenerate, this is likely to be
detected quickly.

A candidate order $k$ can be fully verified by checking that
$\res_y(f(y), \Phi_k(xy))$ is not square-free; we did not put this
verification into the algorithm because it can be quite costly when
$k$ is large.  In Section~\ref{algm:CDM2} when discussing the second
algorithm from~\cite{CDM11}, we observed that computing $R_{f,k} =
\res_y(f(y),\, y^k-x)$ is typically faster than computing
$\res_y(f(y),\, \Phi_k(xy))$; we cannot take this ``short-cut'' here
because the non-square-freeness of $R_{f,k}$ indicates only that $f$ is
$\kappa$-LRS-degenerate for some factor $\kappa$ of $k$.

\begin{algorithm}[!ht]
    \caption{\textbf{(LRSDegeneracyOrders)}}
    \label{algm:LRSDegeneracyOrders}
    \begin{algorithmic}[1]
        \Require Non-constant polynomial $f \in \ZZ[x]$, square-free with $f(0) \neq 0$
        \Ensure A list $L$ of candidate LRS-degeneracy orders of $f$
        \Statex
        \State $K\gets$ list of candidate orders to test \label{step:k-to-test} \Comment{see Section~\ref{subsec:WhichKToTry}}
        \State $L \gets [\,]$, it will contain all ``probable'' orders detected
        \ForEach{candidate order $k \in K$} \label{step:loop-test-lrsd}
        \For{$i=1,2,3$} \label{step:prime-amount}
        \State Pick a prime $p$ with $p \equiv 1 \pmod{k}$ \label{step:prime-1-modn}
        \State Compute $\zeta_k$, a primitive $k$-th root of unity in $\FF_p$
        \For{$j=1,2,\ldots, \lfloor k/2 \rfloor$} \label{step:loop-roots}
        \IIf{$\gcd(j,k)=1$ and $\gcd(\bar{f}(x),\bar{f}(\zeta_k^j x))=1$}
        \State skip to next $k$\EndIIf \Comment{$\bar{f}$ is canonical image of $f$ in $\FF_p[x]$}
        \EndFor
        \EndFor  \label{step:loop-end}
        \State Append $k$ to $L$ as a probable LRS-degeneracy order
        \EndForEach
        \State \Return $L$
    \end{algorithmic}
\end{algorithm}

In Step~\ref{step:prime-1-modn} we pick a suitable finite field and
employ Lemma~\ref{lemma:fieldhom}.  By Dirichlet's theorem we know
that there are infinitely many primes of the form $1 +ks$.  To lower
the risk of false positives we choose a prime $p > 8 (\deg f)^2$ and
such that $s \ge 64$; we avoid small $s$ so that the vast majority of
field elements are not $k$-th roots of unity, and we want all ratios
of roots of $f$ to cover only a small proportion of the field
elements.  We obtain a primitive $k$-th root of unity via the hint in
Example~\ref{ex:PrimKthRoot}.  Our implementation chooses the primes
randomly, so it is harder to construct ``pathological'' inputs.


In the loop at Step~\ref{step:loop-roots} we can stop iterating at $\lfloor k/2 \rfloor$ because
for $j > k/2$ we have $\zeta_k^j = 1/\zeta_k^{k-j}$,
so letting $g = \gcd(\bar{f}(x), \bar{f}(\zeta_k^{k-j} x))$ we deduce that
$\gcd(\bar{f}(x), \bar{f}(\zeta_k^j x)) = u\, g(\zeta_k^j x)$ for some unit $u$.
So both gcds have the same degree.  Here we write $\bar{f}$ to denote
the canonical image of $f$ in $\FF_p[x]$.

To further reduce the chance of a false positive, we perform the
analogous test for up to two more primes: the loop at Step~\ref{step:prime-amount} performs
at most 3 iterations.  If all these checks pass then ``very likely''
$f$ is indeed $k$-LRS-degenerate, so we return a positive result.
Quantifying the probability of a false positive would require assuming
a distribution on the input polynomials, but there is no obvious,
reasonable choice.

\subsection{Which \texorpdfstring{$k$}{k} values to try?}
\label{subsec:WhichKToTry}

In Step~\ref{step:k-to-test} of Algorithm~\ref{algm:LRSDegeneracyOrders}~(\textbf{LRSDegeneracyOrders}) we need an
initial set of candidate orders to test.  Recall that we assume that
$f$ is square-free in $\ZZ[x]$; we write $d =\deg(f)$.  One possible
set comprises all $k$ such that $\totient(k) \le d^2-d$, because
if $f$ is $k$-LRS-degenerate then $\Phi_k(x)$ divides $\res_y(f(xy),f(y))/(x-1)^d$
which has degree $d^2-d$. We can refine this set of $k$ values to try as follows.
Assume that $f$ is $k$-LRS-degenerate, and let $\alpha,\beta$ be roots of $f$ for which $\alpha/\beta = \zeta_k$.
We consider two cases:
\begin{itemize}
\item If there is an irreducible factor $g$ of $f$ having both
  $\alpha$ and $\beta$ as roots then the algebraic extension
  $L = \QQ(\alpha,\beta)$ has degree $d_\alpha d_\beta$ where
  $d_\alpha = \deg(g)$ and $d_\beta < d_\alpha$.  Since $L$ contains
  $\QQ(\zeta_k)$ we have that $\totient(k)  \mid d_\alpha d_\beta$.

\item Otherwise there are distinct irreducible factors $g_\alpha$ and
  $g_\beta$ being the minimal polynomials for $\alpha$ and $\beta$
  respectively.  Without loss of generality $\deg(g_\alpha) \le \deg(g_\beta)$.
  Let $d_\alpha = \deg(g_\alpha)$ and $d_\beta = \deg(\tilde{g}_\beta) \le \deg(g_\beta)$
  where $\tilde{g}_\beta$ is the minimal polynomial of $\beta$ over $\QQ(\alpha)$.
  By the same argument as before we have that $\totient(k) \mid d_\alpha d_\beta$.
\end{itemize}
Since $\totient(k)$ is even for all $k > 2$ we can exclude pairs
$(d_\alpha, d_\beta)$ where both are odd.  The only instances of the
second case which are not covered by the first case are when $d_\alpha
= d_\beta$.  So we need to compute only the instances of the first
case, and add to them the squares of all even numbers up to $d/2$.  We
just enumerate all possibilities for $d_\alpha$ and $d_\beta$, and
build a ``sieve'' containing all $d_\alpha d_\beta$ values and their
even factors.  Finally we create a list of all $k$ such that
$\totient(k)$ is in the sieve.  This is what Algorithm~\ref{algm:LRSOrderCandidates}~(\textbf{LRSOrderCandidates}) does.
Empirically, this seems to eliminate $50$--$75\%$ of the candidate orders in the naive set.

\begin{algorithm}[!ht]
\caption{\textbf{(LRSOrderCandidates)}}
\label{algm:LRSOrderCandidates}
\begin{algorithmic}[1]
    \Require $d \in \ZZ_{\ge 2}$ being the degree of the polynomial $f$ we are testing
    \Ensure A list of candidate LRS-degeneracy orders to check
    \Statex
    \State $D \gets [\,]$, this will be our list of values which $\totient(k)$ must divide
    \For{$\alpha = 1,2,\ldots, d$}
    \For{$\beta = 1,2,\ldots, \alpha-1$}
    \IIf{$\alpha \beta$ is even} append $\alpha\beta$ to $D$\EndIIf
    \EndFor
    \EndFor
    \For{$\alpha = 2,4,6,\ldots, d/2$}
    \State Append $\alpha^2$ to $D$
    \EndFor
    \State \Return list of all $k>2$ up to $\totient^{-1}_{max}(d^2-d)$ such that $\totient(k)$ divides an element of $D$
\end{algorithmic}
\end{algorithm}


\subsubsection{Conjectured short-cut}
\label{sec:conjectures}

If we are just interested in the smallest order $k$ such that $f$ is $k$-LRS-degenerate then,
\emph{dependent on a conjecture,} we can reduce the list of $k$ to try considerably.
\begin{itemize}
\item We conjecture that if $f$ is LRS-degenerate then the minimal order $k$ satisfies $\totient(k) \le \deg(f)$.
\item Furthermore, we conjecture that if $f$ is irreducible and LRS-degenerate
then it is $k$-LRS-degenerate for some order $k$ with $\totient(k) \mid \deg(f)$.
\end{itemize}

\begin{example}
  Let $f = \Phi_3 \Phi_5$, so $\deg(f) = 6$.  Then $f$ is obviously
  $k$-LRS-degenerate for $k=3$ and $k=5$.  In fact,
  $f$ is also $15$-LRS-degenerate,  yet $\totient(15) = 8 > \deg(f)$.
  So if we want to find \emph{all} orders $k$ then the
  conjectured limit does not apply.
\end{example}


\subsection{Testing for LRS-degeneracy via evaluation}

We considered using the method of Section~\ref{subsec:CycloFactorsByEval}
to identify the cyclotomic factors in $R_f$.
A key point is that, for any evaluation point $b$, we have
$\res_x(f(x), f(bx)) = R_f(b)$, where $R_f$ is the resultant from
Section~\ref{algm:CDM1}.  Observe that the numerical resultant
$\res_x(f(x), f(bx))$ can be computed directly and quickly without
having to compute explicitly $R_f(y)$ as a polynomial.
We can also use the algorithm of Section~\ref{subsec:WhichKToTry} to
supply an initial list of candidate indexes.

We tried this implementation, but it was slower than our modular
method of Section~\ref{subsec:ModularAlgorithm}: the cost of computing
the numerical resultants (in CoCoALib) was too high.

\subsection{Timings}
\label{sec:TimingsLRSDegenerate}

In Table~\ref{tbl:LRSDegeneracyOrders} we present some timings of our
implementation of Algorithm~\ref{algm:LRSDegeneracyOrders}
(\textbf{LRSDegeneracyOrders}).  We tried the three largest examples
from~\cite{CDM11}, each of degree 24; but each example took less than
0.1s (to compute \emph{all} LRS-degeneracy orders).  Instead we
generated some higher degree polynomials: four with uniform random
integer coefficients from $-2^{10}$ to $2^{10}$ (a~larger coefficient
range made essentially no difference); and three products of
cyclotomic polynomials which have many orders.  As expected, none of
the random polynomials was LRS-degenerate.


In none of our tests did we observe any false positive.
Also, for polynomials with larger LRS-degeneracy orders, the
costs of verification dominate: for instance, we did not perform verification
for the degree $1\,236$ polynomial as fifteen of the candidate orders are at least $22\,755$,
and the largest order is $103\,037$.

\clearpage
\begin{table}[!ht]
	\centering
	\caption{Timings for \texttt{LRSDegeneracyOrders}}
	\label{tbl:LRSDegeneracyOrders}
	\renewcommand{\arraystretch}{1.1}
	\begin{tabular}{|l|S[table-format=4.0]|S[table-format=4.2]|S[table-format=3.2]|}
		\hline
		Description & {Deg}   &  {Unverified} & {Verified}  \\
		\hline                          
		Random  & 25 &  0.02 & 0.02 \\
		Random  & 50 &  0.03 & 0.03 \\
		Random  & 100 &  0.19 & 0.20  \\
		Random  & 200 &  1.7 & 1.7 \\
		$\Phi_{51} \Phi_{65} \Phi_{77} $ & 140 & 0.8 & 35 \\
		$\Phi_{165} \Phi_{183} $ & 200 & 2.2 & 118 \\
		$\Phi_{123}\Phi_{185}\Phi_{217}\Phi_{299}\Phi_{319}\Phi_{323}$ & 1236 & 3243 & {---} \\
		\hline
	\end{tabular}
\end{table}

\begin{remark}
  For the verification we checked that $\res_y(f(xy), \Phi_k(y))$ has a repeated factor;
  this is essentially the technique used inside the loop in CDM Algorithm~2~---~see Section~\ref{algm:CDM2}.
  To lower the computational cost we used the fact that the resultant is ``multiplicative'',
  namely $\res(f, g_1 g_2) = \res(f,g_1) \, \res(f,g_2)$, and the classical product from
  Lemma~\ref{lem:cyclo-prods}~(a).  We do this because it is decidedly quicker to compute $\res_y(f(xy), y^k-1)$ rather than $\res_y(f(xy), \Phi_k(y))$.
\end{remark}

\section{Conclusion}

We have presented new, practical algorithms for:
\begin{itemize}
  \item testing whether a polynomial
in $\ZZ[x]$ is cyclotomic, and if so, for finding its index.
\item finding the indexes of all cyclotomic factors of a given polynomial in $\ZZ[x]$
  with a low likelihood of false positives.
\item testing whether a polynomial in $\ZZ[x]$ is LRS-degenerate, and if so,
  determining all LRS-degeneracy orders, with a low likelihood of false positives.
  Currently, final verification of the candidate orders dominates the computation time.
\end{itemize}
The algorithms presented here are implemented as part of in CoCoALib~\cite{CoCoALib} (from version 0.99856).
The software is free and open source; it is available from \url{https://cocoa.dima.unige.it/}

\appendix


\bigskip\goodbreak

\bibliographystyle{elsarticle-num}
\bibliography{2026-06-18-LRSDegenerate}

@misc{OEIS,
    title        = {{The On-Line Encyclopedia of Integer Sequences}},
    author       = {{The OEIS Foundation Inc.}},
    year         = 2023,
    note         = {Published electronically at \url{http://oeis.org}, founded by Sloane, N. J. A.}
}

@misc{GraeffeWiki,
    title        = {{Graeffe's method --- Wikipedia, The Free Encyclopedia}},
    author       = {{Wikipedia contributors}},
    year         = 2023,
    note         = {[Online]},
    howpublished = {\url{https://en.wikipedia.org/w/index.php?title=Graeffe\%27s\_method\&oldid=1189645062}}
}

@misc{ZsigWiki,
    title        = {{Zsigmondy's theorem --- Wikipedia, The Free Encyclopedia}},
    author       = {{Wikipedia contributors}},
    year         = 2023,
    note         = {[Online]},
    howpublished = {\url{https://en.wikipedia.org/w/index.php?title=Zsigmondy\%27s\_theorem\&oldid=1169013750}}
}

@misc{JMRSV23,
    doi          = {10.48550/arxiv.2304.09250},
    title        = {{A proof of the corrected Sister Beiter cyclotomic coefficient conjecture inspired by Zhao and Zhang}},
    author       = {Juran, B. and Moree, P. and Riekert, A. and Schmitz, D. and V\"{o}llmecke, J.},
    year         = 2023,
    publisher    = {arXiv},
    eprint       = {2304.09250},
    archiveprefix = {arXiv},
    primaryclass = {math.NT}
}

@inproceedings{LOW21,
    doi          = {10.1109/lics52264.2021.9470513},
    title        = {{Universal Skolem Sets}},
    author       = {Luca, F. and Ouaknine, J. and Worrell, J.},
    year         = 2021,
    month        = jun,
    booktitle    = {2021 36th Annual {ACM}/{IEEE} Symposium on Logic in Computer Science ({LICS})},
    publisher    = {{IEEE}},
    pages        = {1--6}
}

@article{CDM11,
    doi          = {10.1007/s00200-011-0150-8},
    title        = {{Testing degenerate polynomials}},
    author       = {Cipu, M. and Diouf, I. and Mignotte, M.},
    year         = 2011,
    month        = nov,
    journal      = {Applicable Algebra in Engineering, Communication and Computing},
    publisher    = {Springer Science and Business Media {LLC}},
    volume       = 22,
    number       = 4,
    pages        = {289--300},
    issn         = {1432-0622}
}

@article{TV11,
    doi          = {10.4169/amer.math.monthly.118.08.737},
    title        = {{The Least Prime Congruent to One Modulo n}},
    author       = {Thangadurai, R. and Vatwani, A.},
    year         = 2011,
    month        = oct,
    journal      = {The American Mathematical Monthly},
    publisher    = {Informa {UK} Limited},
    volume       = 118,
    number       = 8,
    pages        = {737--742}
}

@article{AM11,
    url          = {http://www.jstor.org/stable/23075281},
    title        = {{Calculating Cyclotomic Polynomials}},
    author       = {Arnold, A. and Monagan, M.},
    year         = 2011,
    journal      = {Mathematics of Computation},
    publisher    = {American Mathematical Society},
    volume       = 80,
    number       = 276,
    pages        = {2359--2379},
    issn         = {00255718, 10886842}
}

@misc{AM08Wayback,
    title        = {{Cyclotomic Polynomials}},
    author       = {Arnold, A. and Monagan, M.},
    year         = 2008,
    note         = {[Online; accessed 10-March-2024; archived: \url{https://archive.org/details/cyclotomic-polynomials}]},
    howpublished = {\url{http://wayback.cecm.sfu.ca/~ada26/cyclotomic/}}
}

@article{CCS06,
    url          = {http://www.jstor.org/stable/4100322},
    title        = {{Complexity of Inverting the Euler Function}},
    author       = {Contini, S. and Croot, E. and Shparlinski, I. E.},
    year         = 2006,
    journal      = {Mathematics of Computation},
    publisher    = {American Mathematical Society},
    volume       = 75,
    number       = 254,
    pages        = {983--996},
    issn         = {00255718, 10886842}
}

@inproceedings{SB02,
    title        = {{Cyclotomic points on curves}},
    author       = {Smyth, C. and Beukers, F.},
    year         = 2002,
    booktitle    = {Number Theory for the Millennium I (Proceedings of Millennial Conference)},
    publisher    = {A K Peters},
    address      = {Natick, MA},
    volume       = 1,
    pages        = {67--86},
    isbn         = 9781568811260,
    editor       = {Bennett, M. A. and Berndt, B. C. and Boston, N. and Diamond, H. G. and Hildebrand, A. J. and Philipp, W.}
}

@inproceedings{BD89,
    doi          = {10.1007/3-540-51084-2_22},
    title        = {{Effective tests for cyclotomic polynomials}},
    author       = {Bradford, R. J. and Davenport, J. H.},
    year         = 1989,
    booktitle    = {Symbolic and Algebraic Computation. ISSAC 1988},
    publisher    = {Springer Berlin Heidelberg},
    address      = {Berlin, Heidelberg},
    series       = {Lecture Notes in Computer Science},
    volume       = 358,
    pages        = {244--251},
    isbn         = {978-3-540-46153-1},
    editor       = {Gianni, P.}
}

@phdthesis{H63,
    url          = {https://ufdcimages.uflib.ufl.edu/AA/00/03/71/40/00001/rootpoweringofpo00hatf.pdf},
    title        = {{Root-powering of polynomial equations}},
    author       = {Hatfield, F. C.},
    year         = 1963,
    month        = aug,
    place        = {Gainesville, FL},
    school       = {University of Florida}
}

@article{B1886,
    url          = {http://www.jstor.org/stable/24539988},
    title        = {{Taltheoretiske Unders\o{}gelser}},
    author       = {Bang, A. S.},
    year         = 1886,
    journal      = {Tidsskrift for mathematik},
    publisher    = {Mathematica Scandinavica},
    volume       = 4,
    pages        = {70--80},
    issn         = {09092528}
}

@article{B1886_cont,
    url          = {http://www.jstor.org/stable/24540006},
    title        = {{Taltheoretiske Unders\o{}gelser. (Fortsat, se S. 80)}},
    author       = {Bang, A. S.},
    year         = 1886,
    journal      = {Tidsskrift for mathematik},
    publisher    = {Mathematica Scandinavica},
    volume       = 4,
    pages        = {130--137},
    issn         = {09092528}
}

@misc{CoCoALib,
    title        = {{CoCoALib: a C++ library for doing Computations in Commutative Algebra}},
    author       = {Abbott, J. and Bigatti, A. M.},
    year         = {since 2007},
    howpublished = {Available at \url{https://cocoa.dima.unige.it/cocoa/cocoalib/}}
}

@article{B12,
	title = {{On the height of cyclotomic polynomials}},
	volume = {152},
	ISSN = {1730-6264},
	doi = {10.4064/aa152-4-2},
	number = {4},
	journal = {Acta Arithmetica},
	publisher = {Institute of Mathematics, Polish Academy of Sciences},
	author = {Bzdęga, Bartłomiej},
	year = 2012,
	pages = {349--359}
}

@inproceedings{BPV81,
	title = {{On the size of the coefficients of the cyclotomic polynomial}},
	author = {Bateman, P. T. and Pomerance, C. and Vaughan, R. C.},
	url = {https://math.dartmouth.edu/~carlp/cyclo.pdf},
	year = 1981,
	pages = {171--202},
	booktitle = {Topics in Classical Number Theory},
	series = {Colloquia Mathematica Societatis János Bolyai},
	volume = 34,
	address = {Budapest, Hungary}
}

\goodbreak

\end{document}